\theoremstyle{plain}
\def    \ut     {\Tilde{u}}
\def    \R  {{\Bbb R}}
\def    \Z  {{\Bbb Z}}
\def    \CP {{\Bbb {CP}}}
\def    \C  {{\Bbb C}}
\def    \N  {{\Bbb N}}
\def    \index  {{\operatorname{index}}}
\def    \Tilde  {\widetilde}
\def    \Gt     {\Tilde{G}}
\newtheorem*{theorem 1}{Theorem 1}
\def    \Tilde  {\widetilde}
\numberwithin{equation}{section}
\newtheorem{proposition}[equation]{Proposition}
\newtheorem{theorem}[equation]{Theorem}
\newtheorem{lemma}[equation]{Lemma}
\newtheorem{corollary}[equation]{Corollary}
\theoremstyle{definition}
\newtheorem{example}[equation]{Example}
\begin{document}
\title[Hamiltonian $S^1$-manifolds in dimension 6]{Revisit Hamiltonian $S^1$-manifolds of dimension 6 with 4 fixed points}

\author{Hui Li} 
\address{School of mathematical Sciences\\
        Soochow University\\
        Suzhou, 215006, China.}
       \email{hui.li@suda.edu.cn}

\thanks{2020 classification. 53D05, 53D20, 55N25, 57R20}
\keywords{Symplectic manifold, Hamiltonian circle action, equivariant cohomology, Chern classes.}

\begin{abstract}
If the circle acts in a Hamiltonian way on a compact symplectic manifold of dimension $2n$, then there are at least $n+1$ fixed points. The case that there are exactly $n+1$ isolated fixed points has its importance due to various reasons.
Besides dimension 2 with 2 fixed points, and dimension 4 with 3 fixed points, which are known, the next interesting case is dimension 6 with 4 fixed points, for which the integral cohomology ring and the total Chern class of the manifold, and the sets of weights of the circle action at the fixed points are classified by Tolman. In this note, we use a new different argument to prove Tolman's results for the  dimension 6 with 4 fixed points case. We observe that the integral  cohomology ring of the manifold has a nice basis in terms of the moment map values of the fixed points, and the largest weight between two fixed points is nicely related to the first Chern class of the manifold. 
We will use these ingredients to determine the sets of weights of the circle action at the fixed points,
and moreover to determine the global invariants the integral cohomology ring and total Chern class of the manifold. The idea allows a direct approach of the problem, and the argument is short and easy to follow.  
\end{abstract}

\maketitle
\section{introduction}
If the circle acts in a Hamiltonian way on a compact $2n$-dimensional symplectic manifold $(M, \omega)$ with moment map $\phi$, then the fixed point set $M^{S^1}$ contains at least $n+1$ points. If $M^{S^1}$ consists of exactly $n+1$ points, we call the action has {\it minimal} isolated fixed points. When $\dim(M)=2$ with 2 fixed points, then $M$ can only be $S^2$. When 
$\dim(M)=4$ with 3 fixed points, then $M$ is $S^1$-equivariantly symplectomorphic to $\CP^2$ (see e.g. \cite{Ka} or  \cite{J1}).
Tolman studies the case when $\dim(M)=6$ and $M^{S^1}$ consists of 4 points (\cite{T}). 
Part of the motivation of her work comes from the classical Petrie's conjecture in the symplectic category.
She classifies the integral cohomology ring and the total Chern class of the manifold, and the sets of weights of the circle action at the fixed points. After the work in dimension 6, it comes the work 
in dimension 8 (\cite{GS, JT}),  in dimension 10 (\cite{L2}), and in dimension $2n$ with large first Chern classes (\cite{L1}). Now, for the purpose of other studies for dimension 6, we try to look at 
the case of dimension 6 with 4 fixed points again, in a point of view that this note presents, and reprove Tolman's results. Note that by the fact that the moment map is a perfect Morse function,  in dimension 6, having 4 fixed points is the same as restricting the second Betti number $b_2(M)=1$, since $b_4(M)=1$ by Poincar\'e duality and $b_0(M)=b_6(M)=1$ naturally hold. Tolman also considers nonisolated fixed points for the case $b_2(M)=1$. In this note, we restrict to isolated fixed points.

  Now we concentrate on $\dim(M)=6$ and $M^{S^1}$ consists of 4 points, i.e., $M^{S^1}=\{P_0, P_1, P_2, P_3\}$. At each $P_i$, a neighborhood of $P_i$ in $M$ is $S^1$-equivariantly diffeomorphic to a neighborhood of the tangent space $T_{P_i} M\approx\C^3$ on which $S^1$ acts linearly by 
$$\lambda \cdot (z_1, z_2, z_3) = (\lambda^{w_{i1}}z_1, \lambda^{w_{i2}}z_2, \lambda^{w_{i3}}z_3),$$ 
where $\lambda\in S^1$ and $w_{ik}\in\Z$ for $k=1, 2, 3$. The integers $w_{ik}$'s are well defined, and are called the {\it  weights} of the $S^1$-action at $P_i$.
Recall that for the Morse function the moment map $\phi$, the {\it Morse index of $P_i$} is twice the number of negative weights at $P_i$. 
By \cite[Lemma 3.3]{L1}, we can label the fixed points so that $P_i$ has Morse index $2i$ for 
$\phi$ with $i=0, 1, 2, 3$, and
\begin{equation}\label{order}
\phi(P_0) < \phi(P_1) < \phi(P_2) < \phi(P_3),
\end{equation}
moreover, $H^{2i}(M; \Z) =\Z$ with $i=0, 1, 2, 3$.
If the cohomology class $[\omega]$ represented by the symplectic form is integral, by Lemma~\ref{k|}, we have 
$$\phi(P_i)-\phi(P_j)\in \Z \,\,\,\mbox{for all $i, j$}.$$
 We observe that, for a Hamiltonian $S^1$-action on a compact symplectic manifold with {\it minimal} isolated fixed points, the equivariant and ordinary cohomology rings of the manifold have nice simple basis in terms of the moment map values of the fixed points. We also observe that, for a Hamiltonian $S^1$-action on a compact symplectic manifold $M$ with isolated fixed points, if in particular $H^2(M; \Z)=\Z$, then the largest weight between two fixed points is nicely related to the first Chern class of the manifold. For $\dim(M)=6$, if we use these ingredients  in the determination of the weights at the fixed points, then we find that the weights can be determined in a straightforward way. Moreover, we use the above mentioned basis of equivariant and ordinary cohomology rings of the manifold in terms of the moment map values of the fixed points to determine the global invariants, the integral cohomology ring and the total Chern class of the manifold. Tolman uses a different basis in \cite{T}. Our proofs lie in the use of Lemmas~\ref{ring}, \ref{c1}, \ref{ij}, and \ref{extension}. These observations show the strong link between the local data --- weights at the fixed points and moment map values of the fixed points, and the global intrinsic invariants --- the integral cohomology ring and the Chern classes of the manifold.

For a larger category, namely circle actions on a compact almost complex manifold $M$ of dimension 6 with 4 fixed points,  Ahara \cite{A}, and Jang \cite{J2} classify the sets of weights at the fixed points, where Ahara restricts to consider the case that Todd$(M)=1$ and $c_1^3[M]\neq 0$.
Here, for Hamiltonian circle actions on symplectic manifolds, besides the sets of weights at the fixed points, we also determine the two global  invariants, the integral cohomology ring and the total Chern class of the manifold. This reveals the close relationships between the local data related to the circle action and the global geometry and topology of the manifold. 

 Throughout the paper, a {\it Hamiltonian $S^1$-manifold}  means a symplectic manifold endowed with a Hamiltonian $S^1$-action. The results are stated as follows.
\begin{theorem 1}\label{thm}
Let $(M, \omega)$ be a compact effective Hamiltonian $S^1$-manifold of dimension 6 with 4 fixed points $P_0$, $P_1$, $P_2$, and $P_3$ and moment map $\phi$. Then the integral cohomology ring $H^*(M; \Z)$, the total Chern class $c(M)$, and the sets of weights at the fixed points have the  following 4 types. 
\begin{itemize}
\item [(1a)] $H^*(M; \Z)=\Z[x]/x^4$,  $c(M)= (1+x)^4$.
$$P_0\colon \{a, a+b, a+b+c\},\,\,\,  P_3\colon \{-a-b-c, -c-b, -c\},$$
                  $$P_1\colon \{-a, b, b+c\}, \,\,\, P_2\colon \{-a-b, -b, c\}.$$
In this case $\phi(P_1)-\phi(P_0)=a$, $\phi(P_2)-\phi(P_1)=b$ and $\phi(P_3)-\phi(P_2)=c$ if $[\omega]$ is primitive integral.
These data are the same as those of $\CP^3$ with a standard circle action.
\item [(1b)] $H^*(M; \Z)=\Z[x, y]/(x^2-2y, y^2)$, $c(M)= 1 + 3x + 8y + 4xy$.
 $$P_0\colon \Big\{a, a+b, \frac{1}{2}(2a+b)\Big\}, \,\,\, P_3\colon \Big\{-\frac{1}{2}(2a+b), -a-b, -a\Big\},$$
$$P_1\colon \Big\{-a, \frac{1}{2}b, a+b\Big\},\,\,\, P_2\colon \Big\{-a-b, -\frac{1}{2}b, a\Big\}.$$
In this case $\phi(P_1)-\phi(P_0)=\phi(P_3)-\phi(P_2)=a$ and $\phi(P_2)-\phi(P_1)=b$ if $[\omega]$
is primitive integral, where $b$ is even.
These data are the same as those of  $\Tilde G_2(\R^5)$ with a standard circle action.
\item [(2a)] $H^*(M; \Z)=\Z[x, y]/(x^2-5y, y^2)$, $c(M)= 1 + 2x + 12y + 4xy$.
$$P_0\colon \{1, 2, 3\},\,\,\, P_3\colon \{-1, -2, -3\},$$
 $$P_1\colon \{-1, 4, 1\},\,\,\, P_2\colon \{-1, -4, 1\}.$$
 In this case $\phi(P_1)-\phi(P_0)=\phi(P_3)-\phi(P_2)=1$ and $\phi(P_2)-\phi(P_1)=4$ if $[\omega]$ is primitive integral. These data are the same as those of  $V_5$ with a circle action.
\item [(2b)] $H^*(M; \Z)=\Z[x, y]/(x^2-22y, y^2)$, $c(M)= 1 + x + 24y + 4xy$.
$$P_0\colon \{1, 2, 3\}, \,\,\, P_3\colon \{-1, -2, -3\},$$
$$P_1\colon \{-1, 5, 1\}, \,\,\, P_2\colon \{-1, -5, 1\}.$$ In this case $\phi(P_1)-\phi(P_0)=\phi(P_3)-\phi(P_2)=1$ and $\phi(P_2)-\phi(P_1)=10$ if 
$[\omega]$ is primitive integral. These data are the same as those of  $V_{22}$ with a circle action.
\end{itemize}
Here, $\deg(x) =2$ and $\deg(y)=4$.
\end{theorem 1}

Theorem~\ref{thm} follows from Theorems~\ref{thmring}, \ref{Chern}, \ref{1ab}, and \ref{2ab},
Proposition~\ref{graph}, and Lemma~\ref{(3)}. 

For self completeness, we now give the examples $\CP^3$ and $\Tilde G_2(\R^5)$, corresponding to (1a) and (1b).

 \begin{example}\label{CP3}
Consider $\CP^3$ with the $S^1$-action:
$$\lambda\cdot [z_0, z_1, z_2, z_3] = [z_0, \lambda^a z_1,\lambda^{a+b}z_2,\lambda^{a+b+c}z_3],$$
where $a, b, c\in\N$.
This action has $4$ fixed points, $P_0 = [1, 0, 0, 0]$, $P_1 = [0, 1, 0, 0]$, $P_2 = [0, 0, 1, 0]$ 
and $P_3 = [0, 0, 0, 1]$, respectively with  moment map values $0$, $a$, $a+b$ and $a+b+c$. 
\end{example}
\begin{example}\label{grass}
Let $\Gt_2(\R^5)$ be the  Grassmannian of oriented $2$-planes in $\R^5$, and let the  $S^1$-action on $\Gt_2(\R^5)$ be induced from the $S^1$-action on $\R^5$  
given by
$$\lambda\cdot (t, z_0, z_1) = (t, \lambda^{a+\frac{b}{2}}z_0, \lambda^{\frac{b}{2}}z_1),$$
where $a\in\N$ and $b\in 2\N$. This action has $4$ fixed points, $P_0$,  $P_1$, $P_2$, and $P_3$,  where  $P_0$ and $P_3$ are the plane $(0, z_0, 0)$  with two different orientations, and $P_1$ and $P_2$ are the 
2-plane $(0, 0, z_1)$  with two different orientations. 
The moment map values of the fixed points are respectively
$-a-\frac{b}{2}$, $-\frac{b}{2}$, $\frac{b}{2}$, and $a +\frac{b}{2}$.
\end{example}

 Cases (2a) and (2b) in the theorem exist by McDuff --- in \cite{M}, she constructs them and shows that they are unique up to $S^1$-equivariant symplectomorphism. We take the names $V_5$ and $V_{22}$ from \cite{M}.

\section{preliminaries}
In this part, we summarize the main tools we will use in our proof.

Let $M$ be a smooth
 $S^1$-manifold. The {\bf equivariant cohomology} of $M$ in a coefficient ring $R$ is
 $H^*_{S^1}(M; R) = H^*(S^{\infty}\times_{S^1} M; R)$, where
 $S^1$ acts on $S^{\infty}$ freely. If $p$ is a point, then $H^*_{S^1}(p; R)= H^*(\CP^{\infty}; R)=R[t]$, where $t\in H^2(\CP^{\infty}; R)$ is a generator.
 If $S^1$ acts on $M$ trivially, then $H^*_{S^1}(M; R)= H^*(M; R)\otimes R[t] =  H^*(M; R)[t]$. The projection map $\pi\colon S^{\infty}\times_{S^1} M\to \CP^{\infty}$ induces a pull back map
$\pi^*\colon H^*(\CP^{\infty}) \to H^*_{S^1}(M)$,
so that $H^*_{S^1}(M)$ is an $H^*(\CP^{\infty})$-module.

First we have an equivariant extension of the symplectic class $[\omega]$ for a Hamiltonian $S^1$-manifold:

\begin{lemma}\cite{LT}\label{ut}
Let the circle act  on a compact symplectic manifold $(M,\omega)$
with moment map $\phi \colon M \to \R$.  Then there exists $\ut =[\omega -\phi t]\in H_{S^1}^2(M;\R)$ such that for any fixed point set component $F$,
$$\ut|_F =[\omega|_F]  - \phi(F)t.$$
If $[\omega]$ is an integral class, then $\ut$ is an integral class.  
\end{lemma}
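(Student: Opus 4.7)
The plan is to work in the Cartan model of equivariant cohomology. With real coefficients, $H^*_{S^1}(M;\R)$ is computed as the cohomology of the complex of $S^1$-invariant forms valued in $\R[t]$ with differential $d_{S^1} = d - t\,\iota_X$, where $X$ is the vector field generating the circle action. The natural candidate for $\tilde{u}$ is the degree-2 equivariant form $\omega - \phi t$, with $\omega$ of form-degree $2$ and $\phi t$ of polynomial degree $2$. A direct computation gives
\begin{equation*}
d_{S^1}(\omega - \phi t) = d\omega - t\,d\phi - t\,\iota_X\omega + t\,\phi\,\iota_X(1).
\end{equation*}
The first term vanishes because $\omega$ is closed, the last term vanishes trivially, and the moment map equation $d\phi = \iota_X\omega$ makes the two middle terms cancel. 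Thus $\tilde{u} := [\omega - \phi t] \in H^2_{S^1}(M;\R)$ is well defined.

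For the restriction formula, I would use that on a fixed component $F$ the vector field $X$ vanishes, so $d_{S^1}|_F = d$ and $H^*_{S^1}(F;\R) \cong H^*(F;\R)\otimes \R[t]$. The moment map $\phi$ is constant on $F$ with value $\phi(F)$, so the restriction of the Cartan representative $\omega - \phi t$ to $F$ is simply $\omega|_F - \phi(F)\,t$, which represents $[\omega|_F] - \phi(F)\,t$ under the identification above. This is exactly the claimed formula, and it holds uniformly for all fixed components.

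The main obstacle is the integrality statement, since the Cartan model only sees real classes. The route I would take is through prequantization. If $[\omega]\in H^2(M;\Z)$, then $\omega$ is the curvature of a Hermitian line bundle $L \to M$ with connection, and the existence of a moment map $\phi$ is equivalent to $L$ admitting an $S^1$-equivariant structure compatible with the connection (Kostant's criterion). The equivariant first Chern class $c_1^{S^1}(L) \in H^2_{S^1}(M;\Z)$ then maps to $[\omega]$ under the restriction $H^2_{S^1}(M;\Z) \to H^2(M;\Z)$, and at a fixed component $F$ it restricts to $[\omega|_F] + w_F t$, where $w_F$ is the weight of the $S^1$-action on the fibers $L|_F$. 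Normalizing the equivariant structure so that $w_F = -\phi(F)$ (which is the precise content of how the moment map determines the lift of the action to $L$) identifies $c_1^{S^1}(L)$ as an integral class whose image in $H^2_{S^1}(M;\R)$ agrees with $\tilde{u}$ on each fixed component, and hence globally by the equivariant localization injectivity for Hamiltonian $S^1$-actions with isolated fixed points, or more generally by comparing restrictions in the Cartan model.
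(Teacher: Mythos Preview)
The paper does not prove this lemma; it is quoted from \cite{LT} and used as a black box. Your argument is the standard one and is essentially correct: construct the equivariant symplectic class in the Cartan model, read off the restriction to fixed components, and for integrality pass through the equivariant first Chern class of a prequantum line bundle.

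Two small points are worth tightening. First, your Cartan computation has a sign slip: with $d_{S^1}=d-t\,\iota_X$ and the convention $d\phi=\iota_X\omega$ you actually get $d_{S^1}(\omega-\phi t)=-t\,\iota_X\omega - t\,d\phi=-2t\,d\phi$, which does not vanish. The cancellation requires either the convention $\iota_X\omega=-d\phi$ or the class $\omega+\phi t$; the paper's choice $\omega-\phi t$ corresponds to the former. This is purely a matter of convention, but you should make your conventions internally consistent. Second, for the integrality step, the equivariant lift of the $S^1$-action to $L$ is only unique up to twisting by a character, which shifts all the fiber weights $w_F$ by a fixed integer; correspondingly $\phi$ is only determined up to an additive constant. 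Your phrase ``normalizing the equivariant structure so that $w_F=-\phi(F)$'' hides the fact that one must first add a constant to $\phi$ so that some (hence every) $\phi(F)$ is an integer, after which exactly one lift matches. The lemma as stated implicitly assumes this normalization, and you should say so explicitly.
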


For an $S^1$-manifold $M$,  when there exists a finite stabilizer group $\Z_k\subset S^1$ with $k > 1$, the set of points, $M^{\Z_k}$, which is pointwise fixed by $\Z_k$ but not pointwise fixed by $S^1$, is called a  {\bf $\Z_k$-isotropy submanifold}.

\begin{lemma}\label{k|}\cite{L1}
Let the circle act on a compact symplectic manifold
$(M, \omega)$ with moment map $\phi\colon M\to\R$. Assume  $[\omega]$ is an integral class. Then for any two fixed point set components $F$ and $F'$, $\phi(F) - \phi(F') \in \Z$.  If $\Z_k$ is the stabilizer group of some point on $M$,  then for any two fixed point set components $F$ and
$F'$ on the same connected component of $M^{\Z_k}$, we have $k\,|\left(\phi(F') - \phi(F)\right)$.
\end{lemma}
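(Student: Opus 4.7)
The plan is to extract both assertions from the integral equivariant extension $\ut$ of Lemma~\ref{ut}, applied first to $M$ and then to a $\Z_k$-fixed symplectic submanifold. Since the quantities $\phi(F)-\phi(F')$ are invariant under additive shifts of $\phi$, I may freely normalize $\phi$ so that $\ut=[\omega-\phi t]\in H^2_{S^1}(M;\Z)$ is actually integral, as supplied by Lemma~\ref{ut}. For any connected fixed component $F$ the $S^1$-action on $F$ is trivial, so I have the canonical integral splitting $H^2_{S^1}(F;\Z)=H^2(F;\Z)\oplus\Z\,t$. The restriction formula $\ut|_F=[\omega|_F]-\phi(F)\,t$ then places $\phi(F)$ in the $\Z t$-summand: since $[\omega|_F]$ is the integral restriction of $[\omega]$ and $\ut|_F$ is integral, $\phi(F)\in\Z$, and the first assertion follows.

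For the second assertion, let $N$ denote the connected component of $M^{\Z_k}$ that contains $F$ and $F'$. Being the fixed set of a compact group of symplectomorphisms, $N$ is a closed, hence compact, $S^1$-invariant symplectic submanifold, and $i_N^*[\omega]\in H^2(N;\Z)$ remains integral. By construction $\Z_k\subset S^1$ acts trivially on $N$, so the restricted $S^1$-action factors through an effective action of $S^1/\Z_k$; identifying this quotient with $S^1$ via the $k$-th root map $\lambda\mapsto\lambda^{1/k}$, the fundamental vector field of the new circle on $N$ is $1/k$ times that of the old, whence the effective moment map is $\phi|_N/k$. I now apply the first assertion to the compact Hamiltonian $S^1$-manifold $(N,\omega|_N,\phi|_N/k)$: the fixed set of the effective action equals $M^{S^1}\cap N$, so contains $F$ and $F'$, and the first assertion yields $\bigl(\phi(F')-\phi(F)\bigr)/k\in\Z$, which is the claimed divisibility.

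The step requiring the most care is the integral splitting $H^2_{S^1}(F;\Z)\cong H^2(F;\Z)\oplus\Z\,t$ used to conclude $\phi(F)\in\Z$; this is what produces an integer rather than a mere rational number. The remaining inputs --- that a connected component of a fixed set of a finite subgroup of symplectomorphisms is a symplectic submanifold, and that $\phi|_N/k$ is the correct moment map after the $k$-th root reparametrization of $S^1/\Z_k$ --- are standard and will not obstruct the argument.
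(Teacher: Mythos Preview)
The paper does not supply its own proof of this lemma; it is quoted from \cite{L1} and used as a black box. So there is nothing in the present paper to compare your argument against.

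That said, your argument is sound and is essentially the standard one. Two small remarks. First, the claim that the quotient $S^1/\Z_k$-action on $N$ is \emph{effective} is neither true in general nor needed: points of $N$ may well have stabilizer strictly containing $\Z_k$, but all you use is that the $S^1$-action on $N$ factors through $S^1/\Z_k$, which suffices to rescale the moment map by $1/k$. Second, in the first paragraph you should perhaps make explicit that the Künneth splitting $H^2_{S^1}(F;\Z)\cong H^2(F;\Z)\oplus \Z t$ has no Tor contribution in degree~$2$ because $H^1(\CP^\infty;\Z)=0$; this is what guarantees the $t$-coefficient of $\ut|_F$ is an honest integer rather than only well-defined modulo torsion. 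With these clarifications your proof goes through without change.
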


For a compact Hamiltonian $S^1$-manifold of dimension $2n$ with $n+1$ fixed points
$M^{S^1}=\{P_0,  P_1, \cdots, P_n\}$ and moment map $\phi$, we can label the fixed points so that  the following inequality holds (\cite{T, L1}):
$$\phi(P_0) < \phi(P_1) < \cdots < \phi(P_n).$$
The equivariant and ordinary cohomology of such a space have the following basis:

\begin{proposition}\label{equibase}\cite{{K}, {LT}}
Let the circle act on a compact $2n$-dimensional symplectic manifold $(M, \omega)$ with moment map $\phi\colon M\to\R$. Assume $M^{S^1}=\{P_0,  P_1, \cdots, P_n\}$. Then
as an $H^*(\CP^{\infty}; \Z)$-module,  $H^*_{S^1}(M; \Z)$ has a basis 
$\big\{\Tilde\alpha_i\in H^{2i}_{S^1}(M; \Z)\,|\, 0\leq i\leq n\big\}$ such that
$$\Tilde\alpha_i|_{P_i} = \Lambda_i^- t^i, \,\,\mbox{and}\,\,\,\, \Tilde\alpha_i|_{P_j} =0,\,\,\forall\,\, j < i,$$
where $\Lambda_i^-$ is the product of the negative weights at $P_i$.
Moreover, $\big\{\alpha_i = r(\Tilde\alpha_i) \in H^{2i} (M; \Z)\,|\, 0\leq i\leq n\big\}$  is a basis for $H^*(M; \Z)$, where
$r\colon H^*_{S^1}(M; \Z)\to H^*(M; \Z)$ is the natural restriction map.
\end{proposition}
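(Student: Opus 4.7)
The plan is to construct the classes $\Tilde\alpha_i$ inductively by equivariant Morse theory applied to the moment map $\phi$, leveraging Kirwan's theorem that $\phi$ is an equivariantly perfect Morse function. With the fixed points labelled so that $\phi(P_0)<\phi(P_1)<\cdots<\phi(P_n)$, equivariant perfection together with the minimality $|M^{S^1}|=n+1$ forces each Morse index in $\{0,2,\ldots,2n\}$ to be attained exactly once, and the ordering by moment map matches the ordering by index, so $P_i$ has Morse index $2i$ and exactly $i$ negative weights.

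Next I would filter $M$ by the sublevel sets $M_i=\phi^{-1}\bigl((-\infty,\phi(P_i)+\varepsilon]\bigr)$ for small $\varepsilon>0$, with $M_{-1}=\emptyset$ and $M_n=M$. Since $P_i$ is an isolated fixed point of index $2i$, the equivariant Morse lemma together with the equivariant Thom isomorphism identifies
$$
H^*_{S^1}(M_i,M_{i-1};\Z)\;\cong\;H^{*-2i}_{S^1}(P_i;\Z)\;=\;\Z[t],
$$
generated in degree $2i$ by the equivariant Thom class $\mathrm{Th}_i$ of the negative normal bundle at $P_i$. Equivariant perfection forces the connecting homomorphism of the long exact sequence of the pair $(M_i,M_{i-1})$ to vanish, yielding short exact sequences
$$
0\to\Z[t]\cdot\mathrm{Th}_i\to H^*_{S^1}(M_i;\Z)\to H^*_{S^1}(M_{i-1};\Z)\to 0.
$$
Inductively the right-hand term is $\Z[t]$-free, and the left-hand term is free as well, so the sequence splits as $\Z[t]$-modules. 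I define $\Tilde\alpha_i\in H^{2i}_{S^1}(M_i;\Z)$ as the image of $\mathrm{Th}_i$, and then promote it to a class in $H^{2i}_{S^1}(M;\Z)$ by using the splittings at each subsequent stage $M_{i+1},\ldots,M_n$.

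The required restriction formulas are then immediate. At $P_i$ the Thom class $\mathrm{Th}_i$ restricts to the equivariant Euler class of the negative normal bundle, which equals $\Lambda_i^-t^i$ by the definition of the weights. For $j<i$ the point $P_j$ lies in $M_{i-1}$, on which $\Tilde\alpha_i$ vanishes because it lifts through the relative group $H^*_{S^1}(M_i,M_{i-1};\Z)$. Chaining these short exact sequences across all $i$ shows that $\{\Tilde\alpha_0,\ldots,\Tilde\alpha_n\}$ is a $\Z[t]$-module basis of $H^*_{S^1}(M;\Z)$, and in particular this module is $\Z[t]$-free. Consequently $H^*(M;\Z)=H^*_{S^1}(M;\Z)/t\cdot H^*_{S^1}(M;\Z)$, so the restrictions $\alpha_i=r(\Tilde\alpha_i)$ form a $\Z$-basis of $H^*(M;\Z)$.

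The main obstacle I expect is carrying equivariant perfection through integrally. Over $\Q$ this is the classical Atiyah--Bott argument, but for a $\Z[t]$-basis one must control potential torsion in the Thom--Gysin sequence. Fortunately the isolated-fixed-point hypothesis saves us: for each $i$ the negative normal bundle is just a complex $S^1$-representation over a point, whose equivariant Thom group $\Z[t]\cdot\mathrm{Th}_i$ is torsion-free, which is precisely what permits the inductive splitting to be performed over $\Z$ and not only over $\Q$.
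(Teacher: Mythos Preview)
The paper does not actually prove this proposition; it is quoted with citations to Kirwan \cite{K} and Li--Tolman \cite{LT} and used as input. Your sketch is essentially the argument found in those references: filter by sublevel sets of $\phi$, identify the relative equivariant cohomology at each critical level via the equivariant Thom isomorphism, invoke equivariant perfection to break the long exact sequence into short exact sequences, and split inductively to produce the basis. So there is nothing to compare against in this paper, and your approach is the standard one.

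One point deserves sharpening. In your final paragraph you attribute integral equivariant perfection to the torsion-freeness of the relative group $\Z[t]\cdot\mathrm{Th}_i$. Torsion-freeness alone does not force the connecting homomorphism to vanish. The actual mechanism is the Atiyah--Bott self-completing criterion: the composite
\[
H^*_{S^1}(M_i,M_{i-1};\Z)\longrightarrow H^*_{S^1}(M_i;\Z)\longrightarrow H^*_{S^1}(P_i;\Z)
\]
is, under the Thom isomorphism, multiplication by the equivariant Euler class $\Lambda_i^-\,t^i$ on $\Z[t]$. Since $\Lambda_i^-\neq 0$, this is injective, hence the first arrow is injective, and exactness then forces $\delta=0$. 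With that adjustment your outline is correct. Note also that the claim that the ordering by moment map value coincides with the ordering by Morse index is a separate lemma (the paper cites \cite[Lemma~3.3]{L1} for it), not an immediate consequence of perfection; you should either cite it or supply the short argument.
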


A direct corollary of Proposition~\ref{equibase} is:

\begin{corollary}\label{cor}
Let the circle act on a compact $2n$-dimensional symplectic manifold $(M, \omega)$ with moment map $\phi\colon M\to\R$. Assume $M^{S^1}=\{P_0,  P_1, \cdots, P_n\}$. If 
$\Tilde\alpha\in H^{2i}_{S^1}(M; \Z)$ is a class such that $\Tilde\alpha|_{P_j}=0$ for all $j<i$, then
$$\Tilde\alpha =  a_i \Tilde\alpha_i \,\,\,\mbox{for some $a_i\in\Z$}.$$
\end{corollary}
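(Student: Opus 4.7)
The plan is to expand $\Tilde\alpha$ in the $\Z[t]$-module basis provided by Proposition~\ref{equibase} and then use the restriction-vanishing hypothesis at $P_0, P_1, \ldots, P_{i-1}$ to eliminate, one at a time, all coefficients except the one in front of $\Tilde\alpha_i$. This is really a triangular linear-algebra argument, since the basis $\{\Tilde\alpha_k\}$ is ``upper triangular'' with respect to the localization maps at the ordered fixed points.

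First I would write
\[
\Tilde\alpha \;=\; \sum_{k=0}^{n} f_k(t)\,\Tilde\alpha_k, \qquad f_k(t)\in\Z[t],
\]
using Proposition~\ref{equibase}. Since $\Tilde\alpha$ has degree $2i$ and $\Tilde\alpha_k$ has degree $2k$, a degree count forces $f_k=0$ for $k>i$ and $f_k(t)=c_k t^{\,i-k}$ with $c_k\in\Z$ for $0\le k\le i$. Thus
\[
\Tilde\alpha \;=\; \sum_{k=0}^{i} c_k\, t^{\,i-k}\,\Tilde\alpha_k.
\]

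Next I would prove by induction on $j=0,1,\ldots,i-1$ that $c_j=0$. Restricting to $P_j$ and using $\Tilde\alpha_k|_{P_j}=0$ for $k>j$ (from Proposition~\ref{equibase}) gives
\[
0 \;=\; \Tilde\alpha|_{P_j} \;=\; \sum_{k=0}^{j} c_k\, t^{\,i-k}\,\Tilde\alpha_k|_{P_j}.
\]
If $c_0=\cdots=c_{j-1}=0$ by the inductive hypothesis, only the $k=j$ term survives, and using $\Tilde\alpha_j|_{P_j}=\Lambda_j^{-} t^{j}$ we obtain
\[
0 \;=\; c_j\,\Lambda_j^{-}\, t^{\,i}.
\]
Since the fixed points are isolated, the action at $P_j$ has no zero weight, so $\Lambda_j^{-}$ is a nonzero integer; hence $c_j=0$. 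This closes the induction and leaves $\Tilde\alpha=c_i\,\Tilde\alpha_i$, as required, so we take $a_i=c_i$.

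There is essentially no hard step: once Proposition~\ref{equibase} is in hand, the triangular restriction pattern does all the work. The only small point to be careful about is the degree bookkeeping that forces the coefficients $f_k$ to be monomials $c_k t^{\,i-k}$ rather than general polynomials, and the observation that $\Lambda_j^{-}\neq 0$ (guaranteed by isolation of the fixed points, which is part of the standing hypothesis), so one really can divide out and conclude $c_j=0$ over $\Z$.
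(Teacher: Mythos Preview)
Your argument is correct and is exactly the standard unpacking of the claim; the paper itself gives no proof beyond calling it ``a direct corollary of Proposition~\ref{equibase}'', and your triangular induction on the restrictions to $P_0,\ldots,P_{i-1}$ is precisely what that phrase means. The only cosmetic remark is that for $j=0$ the quantity $\Lambda_0^-$ is the empty product (equal to $1$), not a product of nonzero weights, so your justification that $\Lambda_j^-\neq 0$ should be read as covering that boundary case as well; the conclusion is unaffected.
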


Using these results, we obtain a basis of the ring $H^*(M; \Z)$:
\begin{lemma}\cite{L2}\label{ring}
Let  $(M, \omega)$ be a compact $2n$-dimensional  Hamiltonian $S^1$-manifold with moment map $\phi\colon M\to\R$. 
Assume $M^{S^1} = \{P_0,  P_1, \cdots, P_n\}$ and $[\omega]$ is a primitive integral class. Then the integral cohomology ring $H^*(M; \Z)$ is generated by the following 
 $\alpha_i\in H^{2i}(M; \Z)$'s for $0\leq i\leq n$:
$$\alpha_i = \frac{\Lambda_i^-}{\prod_{j=0}^{i-1}\big(\phi(P_j)-\phi(P_i)\big)}[\omega]^i,$$
where $\Lambda_i^-$ is the product of the negative weights at $P_i$. 
\end{lemma}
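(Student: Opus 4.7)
The plan is to realize the equivariant basis classes $\Tilde\alpha_i$ of Proposition~\ref{equibase} as explicit products built from the equivariant symplectic class $\ut$ of Lemma~\ref{ut}, and then pass to ordinary cohomology via the restriction map $r\colon H^*_{S^1}(M;\Z)\to H^*(M;\Z)$. Since both sides of the claimed identity are invariant under translating $\phi$ by a constant, I would first normalize so that $\phi(P_0)=0$; Lemma~\ref{k|} then forces every $\phi(P_j)\in\Z$, and at each fixed point $\ut|_{P_k}=-\phi(P_k)t$ already lies in $\Z[t]$.

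For each $i\in\{0,\dots,n\}$, I would then form the integral equivariant class
$$\Tilde\beta_i \;=\; \prod_{j=0}^{i-1}\bigl(\ut + \phi(P_j)\,t\bigr) \;\in\; H^{2i}_{S^1}(M;\Z),$$
whose restriction to a fixed point $P_k$ equals $t^i\prod_{j=0}^{i-1}\bigl(\phi(P_j)-\phi(P_k)\bigr)$. For $k<i$ the factor with $j=k$ kills the product, so $\Tilde\beta_i|_{P_k}=0$; at $P_i$ every factor is nonzero by the ordering \eqref{order}. Corollary~\ref{cor} therefore provides an integer $a_i$ with $\Tilde\beta_i = a_i\Tilde\alpha_i$, and matching restrictions at $P_i$ against Proposition~\ref{equibase} pins down $a_i\Lambda_i^- = \prod_{j=0}^{i-1}(\phi(P_j)-\phi(P_i))$.

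Applying $r$ sends $t\mapsto 0$, so each factor $\ut+\phi(P_j)t$ becomes $[\omega]$ and $\Tilde\beta_i$ maps to $[\omega]^i$. The equation $[\omega]^i = a_i\,\alpha_i$ then solves to the asserted formula
$$\alpha_i \;=\; \frac{\Lambda_i^-}{\prod_{j=0}^{i-1}\bigl(\phi(P_j)-\phi(P_i)\bigr)}\,[\omega]^i.$$
Since Proposition~\ref{equibase} already asserts that $\{\alpha_0,\dots,\alpha_n\}$ is an additive $\Z$-basis of $H^*(M;\Z)$, these classes \emph{a fortiori} generate the ring.

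I do not anticipate a serious obstacle: the identity is essentially a bookkeeping consequence of the existence of the equivariant basis combined with the freedom to shift $\ut$ by the moment map value at each fixed point. The one subtle point worth flagging is that the coefficient $\Lambda_i^-/\prod_{j}\bigl(\phi(P_j)-\phi(P_i)\bigr)$ is only \emph{a priori} rational, so the displayed formula must be read as saying that $a_i$ divides $[\omega]^i$ in $H^*(M;\Z)$; but this divisibility is exactly guaranteed by the integrality of $\Tilde\alpha_i$ in Proposition~\ref{equibase}, and so need not be checked separately.
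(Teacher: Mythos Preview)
Your argument is correct and follows essentially the same route as the paper's own proof: form the product $\prod_{j<i}(\ut+\phi(P_j)t)$, invoke Corollary~\ref{cor} to write it as $a_i\Tilde\alpha_i$, compare restrictions at $P_i$ to identify $a_i$, and then pass to ordinary cohomology. The only cosmetic difference is your preliminary normalization $\phi(P_0)=0$, which the paper omits since (as you yourself observe) the formula is already translation-invariant.
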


In the current arXiv version of \cite{L2}, Lemma~\ref{ring} is stated for dimension 10. It in fact holds for any dimension $2n$. The proof goes as follows. Using the basis $\{\Tilde\alpha_i\,|\, 0\leq i\leq n\}$ of $H^*_{S^1}(M; \Z)$ as in Proposition~\ref{equibase},  by Corollary~\ref{cor}, we have
$$\prod_{j=0}^{i-1}\big(\ut + \phi(P_j)t\big) = a_i\Tilde\alpha_i \,\,\,\mbox{for some $a_i\in\Z$}.$$
Restricting it to $P_i$, we get
$$\prod_{j=0}^{i-1}\big(\phi(P_j)-\phi(P_i)\big) = a_i\Lambda_i^-.$$
Restricting it to ordinary cohomology, we get
$$[\omega]^i = a_i \alpha_i.$$
The 2 equalities above give the result for the generator $\alpha_i$. Then the Lemma follows from 
Proposition~\ref{equibase} on the ring $H^*(M; \Z)$.

Next, we give 2 ways of representing the first Chern class of the manifold.

\begin{lemma}\cite{L1}\label{c1}
Let the circle act on a connected compact symplectic manifold $(M, \omega)$ with moment map 
$\phi\colon M\to\R$. If $c_1(M)=k[\omega]$ with $k\in\R$, then for any two fixed point set components
$F$ and $F'$,  we have 
$$\Gamma_F - \Gamma_{F'} = k\big(\phi(F')-\phi(F) \big),$$
where $\Gamma_F$ and $\Gamma_{F'}$ are respectively the sums of the weights at $F$ and $F'$.
\end{lemma}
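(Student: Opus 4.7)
The plan is to lift both sides of the hypothesis $c_1(M)=k[\omega]$ to equivariant cohomology and then read off the claim by restricting to fixed components. The first observation I would invoke is that because the $S^1$-action on $M$ is Hamiltonian, $M$ is equivariantly formal, so
$$H^*_{S^1}(M;\R)\cong H^*(M;\R)\otimes H^*(BS^1;\R)$$
as $H^*(BS^1;\R)$-modules. In particular, the ordinary restriction map $r\colon H^2_{S^1}(M;\R)\to H^2(M;\R)$ has kernel exactly $\R\cdot t$.

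I would then take the equivariant first Chern class $c_1^{S^1}(TM)\in H^2_{S^1}(M;\R)$, which restricts under $r$ to $c_1(M)$. By Lemma~\ref{ut}, $k\ut$ is another equivariant lift of $k[\omega]=c_1(M)$, so the two must differ by an element of $\ker(r)$; that is,
$$c_1^{S^1}(TM) - k\ut = c\cdot t$$
for some constant $c\in\R$.

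The remaining step is to restrict this identity to any fixed-point component $F$. Decomposing $TM|_F = TF\oplus N_F$, where $S^1$ acts trivially on $TF$ and acts on $N_F$ with weights summing to $\Gamma_F$, the standard splitting-principle formula for equivariant Chern classes at fixed loci gives
$$c_1^{S^1}(TM)|_F = c_1(TF) + c_1(N_F) + \Gamma_F\, t,$$
while Lemma~\ref{ut} supplies $\ut|_F = [\omega|_F] - \phi(F)\, t$. Projecting the resulting equation $c_1^{S^1}(TM)|_F - k\ut|_F = c\cdot t$ onto the $H^0(F)\otimes H^2(BS^1)$ summand yields $\Gamma_F + k\phi(F) = c$. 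Doing the same at a second fixed component $F'$ and subtracting eliminates $c$ and gives the claimed identity $\Gamma_F - \Gamma_{F'} = k(\phi(F') - \phi(F))$.

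The only nontrivial ingredient is the use of equivariant formality to conclude that two equivariant extensions of the same ordinary degree-$2$ class differ by a pure multiple of $t$; this is what makes the constant $c$ well defined and independent of $F$. Once that is in hand, I expect no further obstacle, since the fixed-locus restriction identities for $c_1^{S^1}(TM)$ and $\ut$ are standard and the comparison of the $t$-coefficients at $F$ and $F'$ is immediate.
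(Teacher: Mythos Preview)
The paper does not supply its own proof of this lemma; it is quoted from \cite{L1} without argument. So there is nothing to compare against directly.

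Your proof is correct and is essentially the standard one. The logic is clean: equivariant formality of Hamiltonian $S^1$-manifolds forces any two equivariant degree-$2$ lifts of the same ordinary class to differ by a multiple of $t$ (since $M$ is connected, $\ker r$ in degree $2$ is exactly $\R t$); taking the lifts $c_1^{S^1}(TM)$ and $k\ut$ and restricting to a fixed component $F$ then gives $\Gamma_F + k\phi(F) = c$, and subtracting the analogous identity at $F'$ yields the claim. The fixed-locus formula $c_1^{S^1}(TM)|_F = c_1(TM|_F) + \Gamma_F\, t$ and Lemma~\ref{ut} for $\ut|_F$ are exactly what is needed, and the ``projection onto $H^0(F)\otimes H^2(BS^1)$'' step is just reading off the $t$-coefficient in $H^2_{S^1}(F;\R)=H^2(F;\R)\oplus H^0(F;\R)\cdot t$, which is legitimate because $\Gamma_F$ and $\phi(F)$ are constants on the connected component $F$.
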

In Lemma~\ref{c1}, if $[\omega]$ is a primitive integral class and $c_1(M)=k[\omega]$, then $k\in\Z$
since $c_1(M)$ is an integral class. If $M$ admits a Hamiltonian circle action such that 
$\Gamma_F \neq \Gamma_{F'}$ for minimum $F$ and maximum $F'$, then the constant $k\neq 0$.  
\smallskip

In a symplectic $S^1$-manifold $(M, \omega)$ with isolated fixed points,
if $w>0$ is a weight at a fixed point $P$, $-w$ is a weight  at a fixed point $Q$, and $P$ and $Q$ are on the same connected component of
$M^{\Z_w}$, we say that {\bf $w$ is a weight from $P$ to $Q$}. 
When the signs of $w$ at $P$ and at $Q$ are clear, we will also say that {\bf there is a weight $w$ or $-w$ between $P$ and $Q$}, or {\bf $w$ or $-w$ is a weight between $P$ and $Q$}.

\begin{lemma}\cite{{L1}}\label{ij}
Let the circle act on a connected compact symplectic manifold $(M, \omega)$ with moment map $\phi\colon M\to\R$. Assume $M^{S^1}$ consists of isolated points. Let $P, Q\in M^{S^1}$ with $P\neq Q$, where $\index (P)=2i$ and $\index (Q) = 2j$ with $i \leq j$. Assume there is a weight $w >0$ from $P$ to $Q$,  $-w$ is not a weight at $P$, $-w$  has multiplicity $1$ at $Q$, and $w$ is the largest among the absolute values of all the weights at $P$ and $Q$. If $c_1(M) = k [\omega]$ with $k\in\R$,  then 
$$j-i +1 = k\frac{\phi(Q)-\phi(P)}{w}.$$
\end{lemma}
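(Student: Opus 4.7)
The plan is to restrict $c_1(M)=k[\omega]$ to an $S^1$-invariant $2$-sphere $S$ joining $P$ and $Q$, and to read off the formula from the equivariant splitting of its normal bundle.

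\emph{Identifying $S$.} Since $w$ is the largest absolute value of any weight at $P$ or $Q$, the only weights at these points that are divisible by $w$ are $\pm w$; at $P$ the weight $-w$ is absent, while at $Q$ the weight $-w$ has multiplicity one. The $\Z_w$-connected component of $M^{\Z_w}$ containing both $P$ and $Q$ is a smooth symplectic submanifold of constant complex dimension; matching the local counts at $P$ and $Q$ forces this dimension to be one, so this component is an $S^1$-invariant $2$-sphere $S$ whose only fixed points are $P$ and $Q$, with tangent weights $+w$ at $P$ and $-w$ at $Q$. This geometric identification is the step that requires the most care.

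\emph{Restricting to $S$.} Split $TM|_S = TS \oplus \nu$. Every $S^1$-equivariant complex bundle on $S^2$ decomposes equivariantly into line bundles, so $\nu = \bigoplus_{l=1}^{n-1} L_l$ for equivariant line bundles of degree $d_l$ and with $S^1$-weights $\alpha_l$ at $P$, $\beta_l$ at $Q$. The classical degree-weight identity on $S^2$ reads $\alpha_l - \beta_l = d_l w$. Applying Lemma \ref{c1} to the Hamiltonian $S^1$-manifold $S$ itself (with weight sums $w$ at $P$ and $-w$ at $Q$) gives $[\omega|_S]=(\phi(Q)-\phi(P))/w$ inside $H^2(S;\R)=\R$. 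Hence restricting $c_1(M)=k[\omega]$ to $S$ yields
\[
2 + \sum_{l=1}^{n-1} d_l \;=\; k\,\frac{\phi(Q)-\phi(P)}{w}.
\]

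\emph{Counting signs.} Since the tangent weight $+w$ at $P$ (respectively $-w$ at $Q$) is the one removed to form $\nu_P$ (respectively $\nu_Q$), among the $\alpha_l$ exactly $n-i-1$ are positive and $i$ are negative, while among the $\beta_l$ exactly $n-j$ are positive and $j-1$ are negative. The bound $|\alpha_l|,|\beta_l|<w$ combined with $\alpha_l-\beta_l=d_l w$ forces $d_l\in\{-1,0,1\}$, and the signs of $(\alpha_l,\beta_l)$ determine $d_l$: $d_l=+1$ iff the pattern is $(+,-)$, $d_l=-1$ iff $(-,+)$, and $d_l=0$ iff $\alpha_l=\beta_l$ (both of the same sign). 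A direct count of the four sign patterns yields $\sum_l d_l = (j-1)-i = j-i-1$, and substituting into the displayed equation produces the claimed $j-i+1 = k(\phi(Q)-\phi(P))/w$.
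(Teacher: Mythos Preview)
The paper does not prove this lemma; it simply quotes it from \cite{L1}.  So I evaluate your argument on its own merits.

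Your strategy---restrict $c_1(M)=k[\omega]$ to an invariant $2$-sphere through $P$ and $Q$ and read off the formula from the equivariant splitting of the normal bundle---is correct, and the last two paragraphs are clean and essentially right.  The gap is in the first step.

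You assert that ``matching the local counts at $P$ and $Q$ forces this dimension to be one.''  It does not.  The tangent space to the $\Z_w$-component $C$ at $P$ is the sum of the weight spaces whose weight is divisible by $w$; since $w$ is maximal and $-w$ is absent at $P$, these are precisely the weight-$(+w)$ directions, so $\dim_{\C}T_PC$ equals the multiplicity $m_P$ of $+w$ at $P$.  Likewise $\dim_{\C}T_QC = 1+m_Q^+$, where $m_Q^+$ is the multiplicity of $+w$ at $Q$.  Matching only yields $m_P=1+m_Q^+$; nothing in the hypotheses forces $m_P=1$, so $C$ need not be a $2$-sphere.

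The fix is to take $S$ to be a \emph{gradient sphere} inside $C$ rather than $C$ itself.  In $C$, all tangent weights at $P$ equal $+w$, so $P$ is the unique minimum of $\phi|_C$; the single negative weight $-w$ at $Q$ makes $Q$ an index-$2$ critical point in $C$.  The closure of the one-complex-dimensional unstable manifold of $Q$ in $C$ is then a smooth $S^1$-invariant $2$-sphere whose poles are $Q$ and the minimum $P$, with tangent weights $+w$ at $P$ and $-w$ at $Q$.  With this $S$ your normal-bundle computation runs as written, after one cosmetic adjustment: replace the strict bound $|\alpha_l|,|\beta_l|<w$ by $\alpha_l,\beta_l\in(-w,w]$ (some normal weight may equal $+w$ when $m_P>1$ or $m_Q^+>0$).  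One still gets $\alpha_l-\beta_l\in(-2w,2w)$, hence $d_l\in\{-1,0,1\}$, and your sign-pattern analysis and the count $\sum_l d_l=j-i-1$ go through unchanged.
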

Note that in Lemma~\ref{ij}, if $[\omega]$ is an integral class, then $\frac{\phi(Q)-\phi(P)}{w}\in\Z$ since $w\,|\,\big(\phi(Q)-\phi(P)\big)$ by Lemma~\ref{k|}; if $[\omega]$ is primitive integral, then 
$k\in\Z$. So if $[\omega]$ is primitive integral, then both factors on the right hand side divide the left hand side.
\smallskip

In our proof of determining the sets of weights at the fixed points, we mainly use Lemmas~\ref{ring},
\ref{c1} and \ref{ij}, this is our new idea and method of proofs. In determining the integral cohomology ring and total Chern class of the manifold, we primarily use Proposition~\ref{equibase},  
Corollary~\ref{cor} and Lemma~\ref{extension}. Other things we will use are the  following results. 

\begin{lemma}\cite{JT}\label{JT}
Let the circle act on a closed $2n$-dimensional almost
complex manifold $M$ with isolated fixed points. Let $w$ be the smallest
positive weight that occurs at the fixed points on $M$. Then given any 
$k \in\{0, 1, . . . , n-1\}$, the number of times the weight $-w$ occurs at
fixed points of index $2k+2$ is equal to the number of times the 
weight $+w$ occurs at fixed points of index $2k$.
\end{lemma}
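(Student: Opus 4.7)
The plan is to reduce the statement to a study of the $\Z_w$-isotropy submanifold and then invoke an equivariant rigidity identity on its connected components. Since $w$ is the smallest positive weight appearing on $M$, at any $S^1$-fixed point $P$ the weights $\pm w$ are both divisible by $w$, so the corresponding weight subspaces of $T_P M$ lie inside $T_P N$, where $N$ is the component of $M^{\Z_w}$ through $P$. Consequently every occurrence of $\pm w$ at an $S^1$-fixed point of $M$ is already recorded by the $S^1$-action on some component of $M^{\Z_w}$, and it suffices to establish the identity on each such component.

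On each connected component $N$ of $M^{\Z_w}$ the $S^1$-action has kernel $\Z_w$ and descends to an effective $S^1/\Z_w \cong S^1$-action, under which the weight $\pm w$ becomes $\pm 1$ --- the smallest positive weight on $N$. The Morse index on $M$ of a fixed point $P\in N^{S^1}$ equals the Morse index on $N$ shifted by the number of negative weights in the normal representation $\nu_PN$; because $\nu N$ is an $S^1$-equivariant complex bundle over the connected manifold $N$, it decomposes into isotypic sub-bundles of constant rank, so this shift is constant along $N$. The problem is therefore reduced to proving the statement on $N$ under the hypothesis $w=1$.

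In the case $w=1$, the natural tool is the Atiyah-Bott-Berline-Vergne localization formula applied to the equivariant $\chi_y$-genus of $N$: rigidity of this genus under the $S^1$-action asserts that the sum of local contributions at the fixed points, each a rational function in $t$ and a polynomial in $y$, is independent of $t$. Expanding this identity as a Laurent series in $(1-t)$ and extracting the coefficient of $y^k$ at the leading pole isolates precisely the contributions of the weights $\pm 1$; matching these contributions at adjacent indices yields the claimed count. The delicate step --- and the one I expect to be the main obstacle --- is the residue computation at $t=1$: one must verify that extra contributions from weights of absolute value greater than one cancel within each index range, so that the surviving identity is exactly the bijection between $+1$-weights at index-$2k$ points and $-1$-weights at index-$(2k+2)$ points.
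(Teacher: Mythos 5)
The paper does not prove this lemma---it quotes it from [JT]---so I can only judge your argument on its own terms, and it has a genuine gap in the reduction step. You claim that for a component $N$ of $M^{\Z_w}$ the shift between the Morse index of a fixed point computed in $M$ and computed in $N$ is constant along $N$, because $\nu N$ decomposes into isotypic sub-bundles of constant rank. But that decomposition is with respect to the $\Z_w$-action, so along a single isotypic sub-bundle the $S^1$-weights at two fixed points of $N$ agree only modulo $w$ (this is exactly Lemma~\ref{mod}); their signs can differ, and the number of negative normal weights need not be constant. For example, on $\CP^3$ with $\lambda\cdot[z_0,z_1,z_2,z_3]=[z_0,\lambda^5z_1,\lambda^7z_2,\lambda^{12}z_3]$ the smallest weight is $w=2$, and the component of $M^{\Z_2}$ containing $P_0=[1,0,0,0]$ and $P_3=[0,0,0,1]$ has normal weights $\{5,7\}$ at $P_0$ but $\{-7,-5\}$ at $P_3$, so the index shift is $0$ at one end and $4$ at the other. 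Your reduction to $w=1$ therefore does not preserve the index bookkeeping, and the statement you would prove on $N$ does not translate back to the statement on $M$.

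The reduction is also unnecessary, and the localization computation is set up at the wrong point: at $t=1$ every fixed point contributes a pole of order $n$ to which all weights contribute, so nothing isolates $\pm w$ there. The correct (and standard) move is to use the constancy in $t$ of $\sum_{p}\prod_i\frac{1+yt^{-w_i(p)}}{1-t^{-w_i(p)}}$ and expand at $t=\infty$: the contribution of $p$ is $(-y)^{\lambda_p}\bigl(1+(1+y)\sum_{w_i(p)>0}t^{-w_i(p)}+\tfrac{1+y}{y}\sum_{w_i(p)<0}t^{-|w_i(p)|}+\cdots\bigr)$, where $\lambda_p$ is half the index; since every nonzero weight has absolute value at least $w$ and cross terms have order at least $2w$, the coefficient of $t^{-w}$ sees only the weights $\pm w$, and setting it to zero and dividing by $1+y$ gives $\sum_k(-y)^k\bigl(A_k-B_{k+1}\bigr)=0$, hence $A_k=B_{k+1}$, where $A_k$ and $B_k$ count occurrences of $+w$ and $-w$ at fixed points of index $2k$. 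This works for any smallest weight $w$ directly on $M$, with no passage to $M^{\Z_w}$. Finally, the rigidity (constancy in $t$) of this equivariant $\chi_y$-genus for almost complex $S^1$-manifolds is itself a nontrivial theorem that your sketch assumes; you would need to cite it or prove that the relevant rational function has no poles at the roots of unity.
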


\begin{lemma} \cite{T}\label{mod}
Let the circle act on a compact symplectic manifold $(M, \omega)$. Let $P$ and $Q$ be fixed points which lie on the same connected component of $M^{\Z_k}$ for some $k > 1$. Then the weights of the $S^1$-action at $P$ and $Q$ are equal modulo $k$.
\end{lemma}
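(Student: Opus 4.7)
The plan is to analyze the $S^1$-equivariant geometry near the connected component $N$ of $M^{\Z_k}$ that contains both $P$ and $Q$. Since $N$ is a connected component of the fixed point set of the induced $\Z_k$-action on $M$, standard theory gives that $N$ is a closed $S^1$-invariant symplectic submanifold of $M$. Along $N$ one has the $S^1$-equivariant decomposition $TM|_N = TN \oplus \nu N$, and by the definition of $N$ the $\Z_k$-action is trivial on $TN$ and has no trivial summand on the fibres of the normal bundle $\nu N$.

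Next I would fix a compatible $S^1$-invariant almost complex structure on $M$ and analyze the weights at $P$ and $Q$ using this splitting. Because $\Z_k \subset S^1$ acts trivially on $N$, any $S^1$-weight $w$ occurring on $T_P N$ must satisfy $e^{2\pi i w/k}=1$; hence $k \mid w$ and $w \equiv 0 \pmod k$. The $S^1$-weights on $\nu_P N$, on the other hand, reduce modulo $k$ to the $\Z_k$-characters occurring in the fibre $\nu_P N$. Since $\nu N$ splits canonically as a direct sum of $\Z_k$-isotypic $S^1$-equivariant subbundles whose ranks are locally constant, and $N$ is connected, the multiset of $\Z_k$-characters in $\nu_P N$ coincides with that in $\nu_Q N$.

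Combining the two pieces, the multiset of $S^1$-weights at $P$, reduced modulo $k$, consists of $\tfrac{1}{2}\dim_{\R} N$ copies of $0$ coming from $T_P N$ together with the $\Z_k$-characters of $\nu N$, and exactly the same description holds at $Q$. Therefore the weights at $P$ and $Q$ agree modulo $k$. The one step that requires genuine care is the constancy of the $\Z_k$-isotypic decomposition of $\nu N$ on connected components of $M^{\Z_k}$; this holds because the multiplicities of the isotypic pieces are integer-valued and locally constant on $N$, hence constant by connectedness.
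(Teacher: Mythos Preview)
The paper does not actually supply a proof of this lemma; it is quoted from \cite{T} and used as a black box. So there is nothing to compare your argument against within this paper.

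That said, your argument is the standard and correct one. The essential content is that for a fixed point $P\in N\subset M^{\Z_k}$, the multiset of $S^1$-weights at $P$ reduced modulo $k$ is precisely the $\Z_k$-representation on $T_P M$, and this $\Z_k$-representation type is locally constant along the fixed set $M^{\Z_k}$, hence constant on the connected component $N$. Your decomposition $TM|_N = TN\oplus \nu N$ and the observation that the tangential weights are $\equiv 0\pmod k$ while the normal $\Z_k$-isotypic multiplicities are constant on $N$ makes this explicit. The only minor comment is that you do not really need to separate $TN$ from $\nu N$: it suffices to note that the full $\Z_k$-character of $T_P M$ is constant in $P\in N$, which already gives the equality of weight multisets modulo $k$.
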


\section{determining the sets of weights at the fixed points}

In this part, we determine the sets of weights of the circle action at all the fixed points.

First, we look at the possible connections of the weights at the fixed points.

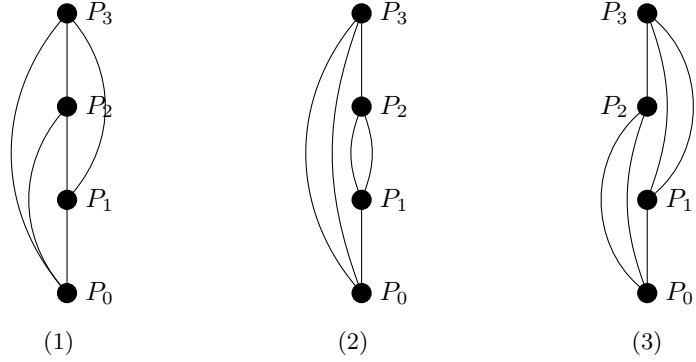
\begin{figure}
\centering
\begin{subfigure}[b][4.5cm][s]{.3\textwidth}
\centering
\vfill
\begin{tikzpicture}[colorstyle/.style={circle, draw=black,fill=black, thick, inner sep=0pt, minimum size=1 mm, outer sep=0pt}, scale=2]
\node (a) [colorstyle,scale=0.5,label={right:$P_0$}]{$P_0$};
\node (b) [above=of a] [colorstyle,scale=0.5,label={right:$P_1$}] {$P_1$};
\node (c) [above=of b] [colorstyle,scale=0.5,label={right:$P_2$}]{$P_2$};
\node (d) [above=of c] [colorstyle,scale=0.5,label={right:$P_3$}]{$P_3$};
\path (a) [-] edge node[right] {} (b);
\path (a) [-] [bend left =40] edge node [right] {} (c);
\path (a) [-] [bend left =40] edge node [right] {} (d);
\path (b) [-] edge node [right] {} (c);
\path (b) [-] [bend right =40]  edge node [right] {} (d);
\path (c) [-] edge node [right] {} (d);
\end{tikzpicture}
\vfill
\caption*{(1)}\label{fig1-1}
\vspace{\baselineskip}
\end{subfigure}
\begin{subfigure}[b][4.5cm][s]{.3\textwidth}
\centering
\vfill
\begin{tikzpicture}[colorstyle/.style={circle, draw=black,fill=black, thick, inner sep=0pt, minimum size=1 mm, outer sep=0pt}, scale=2]
\node (a) [colorstyle,scale=0.5,label={right:$P_0$}]{$P_0$};
\node (b) [above=of a] [colorstyle,scale=0.5,label={right:$P_1$}] {$P_1$};
\node (c) [above=of b] [colorstyle,scale=0.5,label={right:$P_2$}]{$P_2$};
\node (d) [above=of c] [colorstyle,scale=0.5,label={right:$P_3$}]{$P_3$};
\path (a) [-] [bend left =40] edge node[right] {} (d);
\path (a) [-] [bend left =20] edge node [right] {} (d);
\path (a) [-] edge node [right] {} (b);
\path (b) [-] [bend left=20] edge node [right] {} (c);
\path (b) [-] [bend right =20]  edge node [right] {} (c);
\path (c) [-] edge node [right] {} (d);
\end{tikzpicture}
\vfill
\caption*{(2)}\label{fig1-2}
\vspace{\baselineskip}
\end{subfigure}
\begin{subfigure}[b][4.5cm][s]{.3\textwidth}
\centering
\vfill
\begin{tikzpicture}[colorstyle/.style={circle, draw=black,fill=black, thick, inner sep=0pt, minimum size=1 mm, outer sep=0pt}, scale=2]
\node (a) [colorstyle,scale=0.5,label={right:$P_0$}]{$P_0$};
\node (b) [above=of a] [colorstyle,scale=0.5,label={right:$P_1$}] {$P_1$};
\node (c) [above=of b] [colorstyle,scale=0.5,label={left:$P_2$}]{$P_2$};
\node (d) [above=of c] [colorstyle,scale=0.5,label={left:$P_3$}]{$P_3$};
\path (a) [-] edge node[right] {} (b);
\path (a) [-] [bend left =20] edge node [pos=.7, left] {} (c);
\path (a) [-] [bend left =50]edge node [pos=.3, left] {} (c);
\path (b) [-] [bend right =20]  edge node [pos=.3, right] {} (d);
\path (b) [-] [bend right =50] edge node [pos=.7, right] {} (d);
\path (c) [-] edge node [right] {} (d);
\end{tikzpicture}
\vfill
\caption*{(3)}\label{fig1-3}
\vspace{\baselineskip}
\end{subfigure}\qquad
\caption{Three types for the weight relations at the fixed points}\label{fig1}
\end{figure}

\begin{proposition}\label{graph}
Let $(M, \omega)$ be a compact Hamiltonian $S^1$-manifold of dimension 6 with 4 fixed points $P_0$, $P_1$, $P_2$, and $P_3$. Then there are 3 types of weight relations as in Figure~\ref{fig1}.
\begin{enumerate}
\item  There is exactly one weight between any pair of fixed points.
\item  There is one weight between $P_0$ and $P_1$, one weight between $P_2$ and $P_3$, 2 weights between $P_1$ and $P_2$, and 2 weights between $P_0$ and $P_3$.
\item There is one weight between $P_0$ and $P_1$, one weight between $P_2$ and $P_3$,
2 weights between $P_0$ and $P_2$,  and 2 weights between $P_1$ and $P_3$.
\end{enumerate}
\end{proposition}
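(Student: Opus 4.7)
The plan is to reduce the proposition to a small counting argument on a multigraph with vertices $P_0,P_1,P_2,P_3$. Since $\phi$ is a perfect Morse function and, by our labeling, $P_i$ has Morse index $2i$, the fixed point $P_i$ carries $3-i$ positive weights and $i$ negative weights; in particular each $P_i$ has three weights in total, giving six positive and six negative weights across the four fixed points.

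I would first set up a matching between positive and negative weights of equal absolute value. Given a positive weight $w$ at $P_i$ with complex line $L\subset T_{P_i}M$ on which $\Z_w\subset S^1$ acts with weight $w$, the closure of the $S^1$-orbit through the exponential image of a nonzero vector of $L$ is an $S^1$-invariant symplectic $2$-sphere lying in the component of $M^{\Z_w}$ through $P_i$; it has exactly two $S^1$-fixed points, $P_i$ and some $P_j$ with $\phi(P_j)>\phi(P_i)$, and the tangent weight at $P_j$ along this sphere is $-w$. Let $e_{ij}$, for $i<j$, denote the number of such isotropy $2$-spheres between $P_i$ and $P_j$, counted with multiplicity. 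Then tallying positive weights at each $P_i$ and negative weights at each $P_j$ immediately yields $e_{01}=e_{23}=1$ together with the relations $e_{02}+e_{03}=e_{12}+e_{13}=e_{02}+e_{12}=e_{03}+e_{13}=2$. Setting $a:=e_{02}$ forces $e_{12}=e_{03}=2-a$ and $e_{13}=a$ with $a\in\{0,1,2\}$; these three choices recover exactly the configurations (1), (2), (3) of Figure~\ref{fig1}.

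The step that I expect to require the most care is the pairing itself, because when $w$ occurs with multiplicity at $P_i$, or when the component of $M^{\Z_w}$ through $P_i$ has dimension $\geq 4$, the isotropy $2$-spheres above are not canonically singled out. What is actually being used, however, is only that on each connected component $N$ of each $M^{\Z_w}$ the induced Hamiltonian circle action has isolated fixed points with equally many $+w$ and $-w$ tangent weights; summing over all such components produces the same linear system in the $e_{ij}$'s and reduces the proposition to the elementary solution just described.
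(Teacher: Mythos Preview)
Your argument is essentially the paper's, packaged as a linear system in the edge-multiplicities $e_{ij}$ rather than the paper's direct case split on where the two negative weights at $P_2$ flow in their isotropy components; both rest on the same geometric input (each negative weight at $P_i$ lies in an isotropy submanifold whose moment-map minimum is some $P_j$ with $j<i$) and the same degree count. One wording slip: the closure of an $S^1$-orbit of a non-fixed point is a circle, not a $2$-sphere---what you mean is the gradient sphere (closure of a moment-map gradient trajectory tangent to $L$), and indeed your third paragraph correctly retreats to exactly the right invariant statement, the $\pm w$ balance on each component of $M^{\Z_w}$, which is all that the linear system needs.
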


\begin{proof}
Let $\phi$ be the moment map. 
Let $-a$ be the negative weight at $P_1$. Then the connected component of $M^{\Z_a}$ containing $P_1$ is compact and symplectic (Hamiltonian),
and $P_1$ has index 2 in it, so there is a fixed point of index 0 below $P_1$ (relative to $\phi$), this fixed point can only be $P_0$. So there is a weight $a$ between $P_0$ and $P_1$. Similarly, using $-\phi$, we know that there is a weight between $P_2$ and $P_3$. There are 2 negative weights at $P_2$, for each negative weight  at $P_2$, we consider similarly the corresponding isotropy submanifold as above, we can see that there are 3 possibilities:
\begin{enumerate}
\item There is a weight between $P_0$ and $P_2$, and a weight between $P_1$ and $P_2$.
\item There are 2 weights between $P_1$ and $P_2$.
\item  There are 2 weights between $P_0$ and $P_2$.
\end{enumerate}
For each case, consider the fact that there are 3 weights at each fixed point  and the index of the fixed points, we see that 
the weight relation can only be as stated in the proposition.
\end{proof}

\begin{lemma}\label{(3)}
Let $(M, \omega)$ be a compact Hamiltonian $S^1$-manifold of dimension 6 with 4 fixed points $P_0$, $P_1$, $P_2$, and $P_3$.
Then (3) in Proposition~\ref{graph} is not possible.
\end{lemma}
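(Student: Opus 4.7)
The plan is to assume case (3) holds and derive a contradiction from two equivariant localization identities. First, label the positive weights as follows: let $a>0$ denote the weight between $P_0$ and $P_1$ (positive at $P_0$), let $p, q > 0$ be the pair of weights between $P_0$ and $P_2$ (positive at $P_0$), let $r, s > 0$ be the pair between $P_1$ and $P_3$ (positive at $P_1$), and let $c > 0$ be the weight between $P_2$ and $P_3$ (positive at $P_2$). Normalizing $\phi(P_0)=0$, I write $\phi_i = \phi(P_i)$ and let $\Lambda_i$ denote the product of the three weights at $P_i$, so that $\Lambda_0 = apq$, $\Lambda_1 = -ars$, $\Lambda_2 = pqc$, and $\Lambda_3 = -crs$.

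Next I would invoke the Atiyah--Bott--Berline--Vergne localization formula, which is the principle underlying Proposition~\ref{equibase} and Corollary~\ref{cor}: the classes $1\in H^0_{S^1}(M;\R)$ and $\ut\in H^2_{S^1}(M;\R)$ both have degree strictly less than $\dim_{\R}M=6$, so their equivariant integrals over $M$ vanish. Combined with the formula $\ut|_{P_i}=-\phi_i t$ from Lemma~\ref{ut}, this yields the two numerical identities
\[
\sum_{i=0}^{3}\frac{1}{\Lambda_i}=0, \qquad \sum_{i=0}^{3}\frac{\phi_i}{\Lambda_i}=0.
\]

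Finally, I would plug in the case-(3) weight data. The first identity factors as $\frac{a+c}{ac}\cdot\frac{rs-pq}{pq\,rs}=0$, which forces $pq=rs$ since $a+c>0$. Substituting $\phi_0=0$ together with $pq=rs$ into the second identity and clearing the common denominator $acrs$ gives $a\phi_2-a\phi_3-c\phi_1=0$, i.e.\ $a(\phi_2-\phi_3)=c\phi_1$. Writing $d_{ij}=\phi_j-\phi_i$, this becomes $-a\,d_{23}=c\,d_{01}$; but $a,c,d_{01},d_{23}$ are all positive, so the left side is negative and the right side positive, a contradiction. Thus case~(3) cannot occur.

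The hard part I foresee is really just choosing the right two classes to localize: within the paper's framework the identities above should be viewed as consequences of expanding $1$ and $\ut$ in the equivariant basis $\{\tilde\alpha_i\}$ of Proposition~\ref{equibase} and comparing restrictions at the fixed points. What makes case~(3) collapse so cleanly is its symmetric pairing structure (two weights each between $P_0,P_2$ and between $P_1,P_3$), which is precisely what causes the product relation $pq=rs$ to drop out of the first identity, and in turn what makes the second identity produce a relation with an impossible sign. Attempting instead to rule out case~(3) by direct use of Lemma~\ref{ij} or Lemma~\ref{JT} applied to an extremal weight is possible but requires several subcases, whereas the localization route above is one line per identity.
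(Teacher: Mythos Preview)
Your proof is correct and takes a genuinely different route from the paper's. The paper assumes $[\omega]$ is primitive integral, invokes Lemma~\ref{ring} to express the generators $\alpha_2,\alpha_3\in H^*(M;\Z)$ in terms of $[\omega]$ and the weight data, and then imposes Poincar\'e duality $\alpha_1\alpha_2=\alpha_3$ once with $\phi$ and once with $-\phi$ to obtain two multiplicative relations which together force $(a+b+c)^2=b^2$, a contradiction. By contrast, you bypass the ring basis and the integrality hypothesis entirely, working over $\R$ and applying the ABBV localization identities $\sum_i 1/\Lambda_i=0$ and $\sum_i \phi_i/\Lambda_i=0$ directly; the first yields $pq=rs$ and the second then collapses to a sign contradiction. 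Your argument is shorter and more self-contained (it needs only the moment-map ordering and the case-(3) incidence pattern), while the paper's version stays within the ring-theoretic framework it develops for the rest of the classification and illustrates how that framework is used. Either way the two-identity structure is the same in spirit: two independent constraints coming from the symmetry $\phi\leftrightarrow -\phi$ in the paper, versus two low-degree localization vanishings in your approach.
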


\begin{proof}
Assume  (3) in Proposition~\ref{graph} holds. We will argue that it results in a contradiction.

Since $H^2(M; \Z)=\Z$ (see Lemma~\ref{ring}), with no loss of generality, assume  $[\omega]$ is primitive integral. Let $\phi$ be the moment map, by (\ref{order}), we let
$$\phi(P_1)-\phi(P_0)=a\in\N, \,\, \phi(P_2)-\phi(P_1)=b\in\N,\,\,  \phi(P_3)-\phi(P_2)=c\in\N.$$
By Lemma~\ref{ring}  for $\alpha_1=[\omega]$, we know that the weight between $P_0$ and $P_1$ is $a$; similarly, using $-\phi$, we get that the weight between $P_2$ and $P_3$ is $c$.
By (3) of Proposition~\ref{graph} and Lemma~\ref{k|}, we can write the sets of negative weights 
$P_i^-$ for $P_i$, where $i=1, 2, 3$, as follows.
$$P_1^-\colon \{-a\}.$$
$$P_2^-\colon \Big\{-\frac{a+b}{m_1},  -\frac{a+b}{m_2}\Big\}\,\,\,\mbox{for some $m_1, m_2\in\N$}.$$
$$P_3^-\colon \Big\{ -c, -\frac{b+c}{l_1}, -\frac{b+c}{l_2}\Big\} \,\,\,\mbox{for some $l_1, l_2\in\N$}.$$
 By Lemma~\ref{ring},
$$\alpha_1=[\omega], \,\, \alpha_2 =\frac{1}{m_1m_2}\frac{a+b}{b}[\omega]^2, \,\,\alpha_3=\frac{1}{l_1l_2}\frac{b+c}{a+b+c}[\omega]^3.$$ 
By Poincar\'e duality, $\alpha_1\alpha_2=\alpha_3$. So
$$l_1l_2(a+b)(a+b+c) = m_1m_2b(b+c).$$
Similarly, using $-\phi$, we get
$$m_1m_2(c+b)(a+b+c)=l_1l_2b(b+a).$$
The two equalities give a contradiction.
\end{proof}

\begin{theorem}\label{1ab}
Let $(M, \omega)$ be a compact Hamiltonian $S^1$-manifold of dimension 6 with 4 fixed points $P_0$, $P_1$, $P_2$, and $P_3$ and moment map $\phi$. Then the sets of weights at the fixed points in (1) of Proposition~\ref{graph} have 2 cases:
\begin{itemize}
\item [(1a)] $P_0\colon \{a, a+b, a+b+c\}$, $P_1\colon \{-a, b, b+c\}$, $P_2\colon \{-a-b, -b, c\}$, $P_3\colon \{-a-b-c, -c-b, -c\}$.
In this case $\phi(P_1)-\phi(P_0)=a$, $\phi(P_2)-\phi(P_1)=b$ and $\phi(P_3)-\phi(P_2)=c$ if $[\omega]$ is primitive integral.
This case is the same as those of a circle action on $\CP^3$. 
\item [(1b)] $P_0\colon \big\{a, a+b, \frac{2a+b}{2}\big\}$, $P_1\colon \big\{-a, \frac{b}{2}, a+b\big\}$, $P_2\colon \big\{-a-b, -\frac{b}{2}, a\big\}$, $P_3\colon \big\{-\frac{2a+b}{2}, -a-b, -a\big\}$.
In this case $\phi(P_1)-\phi(P_0)=\phi(P_3)-\phi(P_2)=a$ and $\phi(P_2)-\phi(P_1)=b$ if $[\omega]$
is primitive integral.
This case is the same as those of a circle action on $\Tilde G_2(\R^5)$.
\end{itemize}
\end{theorem}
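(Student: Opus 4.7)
The plan is to combine Lemma~\ref{ring} (the Ring Lemma), Poincar\'e duality, and Lemma~\ref{c1} (the $c_1$-identity), and to eliminate one spurious subcase via Lemma~\ref{ij}. I would assume $[\omega]$ is primitive integral, set $a=\phi(P_1)-\phi(P_0)$, $b=\phi(P_2)-\phi(P_1)$, $c=\phi(P_3)-\phi(P_2)$, and denote by $w_{ij}>0$ the unique weight between $P_i$ and $P_j$ for $i<j$ (unique by case~(1) of Proposition~\ref{graph}). Applying Lemma~\ref{ring} to $\alpha_1=[\omega]$, and symmetrically to the $-\phi$-reversed action, gives $w_{01}=a$ and $w_{23}=c$; Lemma~\ref{k|} yields the divisibilities $w_{02}\mid a+b$, $w_{03}\mid a+b+c$, $w_{12}\mid b$, $w_{13}\mid b+c$.

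Computing $\alpha_2,\alpha_3$ via Lemma~\ref{ring} and using the identity $\alpha_1\alpha_2=\alpha_3$ (exactly as in the proof of Lemma~\ref{(3)}) together with its $-\phi$ counterpart yields
\[
w_{12}(a+b+c)=w_{03}\,b\qquad\text{and}\qquad w_{02}(b+c)=w_{13}(a+b).
\]
Writing $m=b/w_{12}$ and $n=(a+b)/w_{02}$ (positive integers by the divisibility conditions), these read $w_{03}=(a+b+c)/m$ and $w_{13}=(b+c)/n$. Next, letting $c_1(M)=k[\omega]$ with $k\in\Z$ (possible since $H^2(M;\Z)=\Z$) and applying Lemma~\ref{c1} to the pairs $(P_0,P_1)$ and $(P_2,P_3)$, substituting the above expressions, and taking the sum and the difference of the two resulting identities produces
\[
k=2+\tfrac{2}{m}\qquad\text{and}\qquad (a-c)\bigl(k-2-\tfrac{2}{n}\bigr)=0.
\]
Integrality of $k$ forces $m\in\{1,2\}$, i.e.\ $k\in\{4,3\}$; if $a\neq c$ the second identity forces $n=m$; if $a=c$, a further application of Lemma~\ref{c1} (to $(P_0,P_2)$) pins down $w_{02}=a+b$ in each subcase. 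The case $m=1$ then delivers exactly case~(1a) and the case $(m=2,\,a=c)$ delivers case~(1b).

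The main obstacle is eliminating the leftover subcase $m=n=2$ with $a\neq c$, in which $k=3$, $w_{03}=(a+b+c)/2$, and each of $w_{02}, w_{12}, w_{13}$ is similarly half the corresponding moment-map difference. To rule it out I would apply Lemma~\ref{ij} via a case split on which weight is largest in absolute value: if $a<b+c$ and $c<a+b$, then $w_{03}$ is the strict largest absolute weight at both $P_0$ and $P_3$ and Lemma~\ref{ij} applied to $(P_0,P_3)$ gives $4=k(a+b+c)/w_{03}=2k$, forcing $k=2$; if $a>b+c$, then $a$ is the strict largest at both $P_0$ and $P_1$ and Lemma~\ref{ij} applied to $(P_0,P_1)$ gives $k=2$; the case $c>a+b$ is symmetric via $-\phi$. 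The borderline equality cases $a+b=c$ and $a=b+c$ violate the multiplicity-one hypothesis of Lemma~\ref{ij}; there I would argue instead that the $\Z_{a+b}$-isotropy (respectively $\Z_a$-isotropy) submanifold through $P_0$ would have a connected component whose complex dimension is $1$ at one of its fixed points and $2$ at another, contradicting the constancy of dimension on a connected submanifold. In every case $k=3$ is contradicted, so this subcase is impossible. Finally, the two surviving families are identified with the standard circle actions on $\CP^3$ and $\Tilde G_2(\R^5)$ of Examples~\ref{CP3} and~\ref{grass} by direct comparison of weights and moment-map values.
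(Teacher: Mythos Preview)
Your proposal is correct and uses the same toolkit as the paper (Lemma~\ref{ring} with Poincar\'e duality and the $-\phi$ symmetry, then Lemma~\ref{c1}, with Lemma~\ref{ij} as a tiebreaker), but the order in which you apply Lemma~\ref{c1} creates avoidable extra work. The paper first applies Lemma~\ref{c1} to the pairs $(P_0,P_3)$ and $(P_1,P_2)$; in your notation this single comparison gives $n=1$ \emph{unconditionally}, so the subcase $m=n=2$ with $a\neq c$ never arises. After that, the comparison of $(P_0,P_1)$ with $(P_2,P_3)$ yields ``$a=c$ or $m=1$'', and the only remaining case $a=c,\ m\ge 2$ is finished by a single use of Lemma~\ref{ij} on $(P_0,P_2)$ (the largest weight is $a+b$, giving $k=3$) followed by one more instance of Lemma~\ref{c1} to force $m=2$. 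By contrast, you start with $(P_0,P_1)$ and $(P_2,P_3)$, obtain only $n=m$ when $a\neq c$, and must then eliminate $m=n=2,\ a\neq c$ via a five-way case split on the largest weight, including an isotropy-dimension argument for the borderline equalities $a=b+c$ and $c=a+b$. That elimination is sound (your $\Z_a$-, respectively $\Z_c$-, isotropy component through $P_0$, respectively $P_3$, really does have complex dimension $2$ there but dimension $1$ at the adjacent fixed point), but it is entirely bypassed by the paper's choice of pairs. In short: same argument, but the paper's ordering of the Lemma~\ref{c1} identities is strictly more efficient and removes the need for your final case analysis.
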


\begin{proof}
Since $H^2(M; \Z)=\Z$, with no loss of generality, assume  $[\omega]$ is primitive integral,
and by (\ref{order}), let
$$\phi(P_1)-\phi(P_0)=a\in\N, \,\, \phi(P_2)-\phi(P_1)=b\in\N,\,\,  \phi(P_3)-\phi(P_2)=c\in\N.$$
By Lemma~\ref{ring} for $\alpha_1=[\omega]$, we get that the weight between $P_0$ and $P_1$ is $a$; similarly, using $-\phi$, we get that the weight between $P_2$ and $P_3$ is $c$.
By (1) of Proposition~\ref{graph} and Lemma~\ref{k|}, the sets of negative weights $P_i^-$ at $P_i$ for $i=1, 2, 3$  are:
$$P_1^-\colon \{-a\}.$$
$$P_2^-\colon \big\{-\frac{a+b}{l_1},  -\frac{b}{l_2}\big\}  \,\,\,\mbox{for some $l_1, l_2\in\N$}.$$
$$P_3^-\colon \big\{-\frac{a+b+c}{m_2}, -\frac{b+c}{m_1}, -c\big\}  \,\,\,\mbox{for some $m_1, m_2\in\N$}.$$
By Lemma~\ref{ring}, 
$$\alpha_1=[\omega], \,\, \alpha_2 = \frac{1}{l_1l_2}[\omega]^2,\,\, \alpha_3 = \frac{1}{m_1m_2}[\omega]^3.$$
By Poincar\'e duality, $\alpha_1\alpha_2=\alpha_3$. So
$$m_1m_2=l_1l_2.$$
Similarly, using $-\phi$, we get
$$l_1m_2 = m_1 l_2.$$
So 
$$m_1 = l_1 \,\,\,\mbox{and}\,\,\, m_2 = l_2.$$
Then we can write the sets of weights at the fixed points only using $m_1$ and $m_2$ as follows:
$$P_0\colon \Big\{a, \frac{a+b}{m_1}, \frac{a+b+c}{m_2}\Big\}, \,\,\,P_3\colon \Big\{-\frac{a+b+c}{m_2}, -\frac{b+c}{m_1}, -c\Big\},$$
$$P_1\colon \Big\{-a, \frac{b}{m_2}, \frac{b+c}{m_1}\Big\},\,\,\, P_2\colon \Big\{-\frac{a+b}{m_1}, -\frac{b}{m_2}, c\Big\}.$$
Since $H^2(M; \Z)=\Z$, we have $c_1(M)=k[\omega]$ for some $k\in\Z$. By Lemma~\ref{c1}, we have
$$\frac{\Gamma_0-\Gamma_3}{a+b+c} = \frac{\Gamma_1-\Gamma_2}{b},$$
where $\Gamma_i$ is the sum of weights at $P_i$,  from which we get
$$m_1 = 1.$$
By Lemma~\ref{c1}, we also have
$$\frac{\Gamma_0-\Gamma_1}{a} = \frac{\Gamma_2-\Gamma_3}{c},$$
from which we get
$$a=c\,\,\,\mbox{or}\,\,\, m_2 = m_1.$$
If $m_2=m_1=1$, then we get the sets of weights at the fixed points as in Case (1a).
Next, assume $a=c$ and $m_2\geq 2$. Then $a+b=\phi(P_2)-\phi(P_0)$ is the largest weight on $M$, is between $P_0$ and $P_2$ (and is between $P_1$ and $P_3$). By Lemma~\ref{ij}, we get
$$c_1(M) = 3[\omega].$$
Then  Lemma~\ref{c1} gives
$$\Gamma_0-\Gamma_1 = 3a,$$
from which we get
$$2+\frac{2}{m_2} = 3.$$
So $$m_2=2.$$
In this case we get the sets of weights at the fixed points as in Case (1b).
\end{proof}

\begin{theorem}\label{2ab}
Let $(M, \omega)$ be a compact  effective Hamiltonian $S^1$-manifold of dimension 6 with 4 fixed points $P_0$, $P_1$, $P_2$, and $P_3$ and moment map $\phi$. Then the sets of weights at the fixed points  in (2) of Proposition~\ref{graph} have 2 possibilities:
\begin{itemize}
\item [(2a)]  $P_0\colon \{1, 2, 3\}$, $P_1\colon \{-1, 1, 4\}$, $P_2\colon \{-1, -4, 1\}$, $P_3\colon \{-1, -2, -3\}$. In this case $\phi(P_1)-\phi(P_0)=\phi(P_3)-\phi(P_2)=1$ and $\phi(P_2)-\phi(P_1)=4$ if $[\omega]$ is primitive integral.
\item [(2b)]  $P_0\colon \{1, 2, 3\}$, $P_1\colon \{-1, 1, 5\}$, $P_2\colon \{-1, -5, 1\}$, $P_3\colon \{-1, -2, -3\}$. In this case $\phi(P_1)-\phi(P_0)=\phi(P_3)-\phi(P_2)=1$ and $\phi(P_2)-\phi(P_1)=10$ if 
$[\omega]$ is primitive integral.
\end{itemize}
\end{theorem}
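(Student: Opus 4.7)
The plan is to mimic the strategy of Theorem~\ref{1ab}. Since $H^2(M;\mathbb{Z}) = \mathbb{Z}$, I may take $[\omega]$ to be primitive integral, and set $a = \phi(P_1)-\phi(P_0)$, $b = \phi(P_2)-\phi(P_1)$, $c = \phi(P_3)-\phi(P_2)$, all in $\mathbb{N}$. By Lemma~\ref{ring} applied to $\alpha_1 = [\omega]$ (and its dual version using $-\phi$), the weight between $P_0, P_1$ is $a$ and between $P_2, P_3$ is $c$. Case~(2) of Proposition~\ref{graph} together with Lemma~\ref{k|} then lets me write the two weights between $P_1, P_2$ as $b/p_1, b/p_2$ and between $P_0, P_3$ as $(a+b+c)/q_1, (a+b+c)/q_2$, with $p_j, q_j \in \mathbb{N}$; arrange $p_1 \le p_2$ and $q_1 \le q_2$. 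By Lemma~\ref{ring}, $\alpha_2 = \tfrac{b}{p_1 p_2(a+b)}[\omega]^2$ and $\alpha_3 = \tfrac{a+b+c}{q_1 q_2(b+c)}[\omega]^3$. The Poincar\'e-duality relation $\alpha_1 \alpha_2 = \alpha_3$ (which holds since the fixed points are minimal) together with its analog for $-\phi$ (which swaps $a\leftrightarrow c$ while fixing $p_j, q_j$) force $a = c$, and the common equation reduces to
\[
b\,q_1 q_2 = p_1 p_2 (2a+b). \tag{$\star$}
\]

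Write $c_1(M) = k[\omega]$ with $k \in \mathbb{Z}$. Lemma~\ref{c1} applied to the pairs $(P_1, P_2)$ and $(P_0, P_3)$ yields
\[
k = \tfrac{2}{p_1} + \tfrac{2}{p_2} - \tfrac{2a}{b}, \qquad k = \tfrac{2a}{2a+b} + \tfrac{2}{q_1} + \tfrac{2}{q_2}.
\]
The right-hand side of the first relation is strictly less than $4$, so $k \ne 4$; hence by Lemma~\ref{ij} the globally largest weight cannot be $2a+b$, which forces $q_1 \ge 2$ (the residual case $q_1 = q_2 = 1$ is excluded because the second relation then gives $k > 4$, contradicting the first). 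The candidates for the global maximum weight are thus $b/p_1$ and $(2a+b)/q_1 \le (2a+b)/2$. Solving $k q_1 = 4$ with $q_1 \in \{2, 4\}$, together with the two displayed relations, $(\star)$, and the integrality conditions $q_j \mid 2a+b$ and $p_j \mid b$, rules out every configuration in which $(2a+b)/q_1$ is the strict global maximum. Hence $b/p_1$ is the strict maximum; $p_1 < p_2$ gives multiplicity one at $P_2$, and Lemma~\ref{ij} yields $k p_1 = 2$, so $(p_1, k) \in \{(1,2),\,(2,1)\}$.

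In both sub-cases, the first displayed relation gives $b = a p_2$. Combining $(\star)$ with the second relation produces closed-form expressions for $q_1 + q_2$ and $q_1 q_2$ in terms of $p_2$. For $(p_1, k) = (1, 2)$: $q_1 q_2 = p_2 + 2$ and $q_1 + q_2 = p_2 + 1$, whence $(q_1 - 1)(q_2 - 1) = 2$, forcing $\{q_1, q_2\} = \{2, 3\}$, $p_2 = 4$, $b = 4a$; effectiveness (so that the gcd of the weights at $P_0$ equals $1$) then yields $a = 1$, matching case~(2a). For $(p_1, k) = (2, 1)$: $q_1 q_2 = 2(p_2 + 2)$ and $q_1 + q_2 = p_2$, so $(q_1 - 2)(q_2 - 2) = 8$, giving $\{q_1, q_2\} \in \{\{3, 10\},\,\{4, 6\}\}$. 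The option $\{3, 10\}$ forces $b = 13a$ with $a$ necessarily even (so that $q_2 = 10$ divides $2a + b = 15a$), and effectiveness gives $a = 2, b = 26$; but then the weight multisets $\{2, 10, 3\}$ at $P_0$ and $\{-2, -10, -3\}$ at $P_3$ are not congruent modulo $10$, contradicting Lemma~\ref{mod} applied to the $\mathbb{Z}_{10}$-component joining $P_0$ and $P_3$. The remaining option gives $p_2 = 10, b = 10a$, and effectiveness forces $a = 1, b = 10$, matching case~(2b). The principal obstacle is this concluding step: the $b = 26$ candidate satisfies every Chern-class, Poincar\'e-duality, and integrality constraint, and only Lemma~\ref{mod} eliminates it.
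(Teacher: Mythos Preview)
Your outline follows the paper's strategy closely---same setup of $a,b,c$, the parameters $p_j,q_j$, the Poincar\'e-duality relation $(\star)$, and the two Chern-class identities from Lemma~\ref{c1}---and the endgame via Lemma~\ref{ij} on $P_1,P_2$ matches the paper's split into $c_1(M)=2[\omega]$ versus $c_1(M)=[\omega]$. The place where you genuinely diverge is the use of Lemma~\ref{mod}: the paper invokes it early (on $P_1,P_2$) to obtain $b=l_1 a$, whereas you postpone it to kill the spurious $\{q_1,q_2\}=\{3,10\}$ solution at the very end. That is a legitimate alternative ordering.

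However, there is a real gap. After establishing $q_1\ge 2$ you write ``the candidates for the global maximum weight are thus $b/p_1$ and $(2a+b)/q_1$'', and later ``hence $b/p_1$ is the strict maximum''. But $a$ is also a candidate, and nothing you have done so far excludes it: with $q_1\ge 3$ one can have $a>(2a+b)/q_1$. The paper devotes an explicit paragraph to this (using Lemma~\ref{JT} and Lemma~\ref{mod}), and without it your conclusion that $b/p_1$ is the maximum does not follow. The gap is fixable with your own tools---if $a$ were the maximum at $P_0$ and $P_1$, Lemma~\ref{ij} gives $k=2$, the first displayed relation then forces $p_1=p_2=1$ and $a=b$, whence $(\star)$ gives $q_1q_2=3$, contradicting $q_1\ge 2$---but you must actually say this. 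Similarly, you assert ``$p_1<p_2$ gives multiplicity one'' without having established the strict inequality; the paper proves $\gcd(b/l_1,b/l_2)=1$ via a four-dimensional isotropy-submanifold argument, and you need something equivalent before invoking Lemma~\ref{ij}.

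Finally, the sentence ``rules out every configuration in which $(2a+b)/q_1$ is the strict global maximum'' compresses a nontrivial case analysis (the paper's cases (i) and (ii) under the extra hypothesis $a\ge b/l_2$) into a single assertion. Your claim is in fact broader than the paper's, since you do not assume $a\ge b/p_1$; the relations do eventually force $b/p_1>(2a+b)/q_1$ in every integer solution, but this requires checking.
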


\begin{proof}
Since $H^2(M; \Z)=\Z$, with no loss of generality, assume  $[\omega]$ is primitive integral,
and by (\ref{order}), let
$$\phi(P_1)-\phi(P_0)=a\in\N, \,\, \phi(P_2)-\phi(P_1)=b\in\N,\,\,  \phi(P_3)-\phi(P_2)=c\in\N.$$
By Lemma~\ref{ring} for $\alpha_1=[\omega]$, we know that the weight between $P_0$ and $P_1$ is $a$; similarly, the weight between $P_2$ and $P_3$ is $c$. By (2) of Proposition~\ref{graph} and Lemma~\ref{k|},
let the 2 weights between $P_0$ and $P_3$ be
$$\frac{a+b+c}{m_1}\,\,\,\mbox{and}\,\,\, \frac{a+b+c}{m_2},\,\,\,\mbox{with $m_1, m_2\in\N$},$$
and the 2 weights between $P_1$ and $P_2$ be
$$\frac{b}{l_1}\,\,\,\mbox{and}\,\,\, \frac{b}{l_2}, \,\,\,\mbox{with $l_1, l_2\in\N$}.$$
By Lemma~\ref{ring}, we get 
$$\alpha_1=[\omega], \,\,\, \alpha_2=\frac{1}{l_1l_2}\frac{b}{a+b}[\omega]^2,\,\,\, \alpha_3=\frac{1}{m_1m_2}\frac{a+b+c}{b+c}[\omega]^3.$$
 Poincar\'e duality $\alpha_1\alpha_2=\alpha_3$ yields
$$m_1m_2b(b+c)=l_1l_2(a+b)(a+b+c).$$
Similarly, using $-\phi$, we get
$$m_1m_2b(b+a)=l_1l_2(c+b)(a+b+c).$$
Hence 
$$a=c,$$
and
\begin{equation}\label{ringimp}
m_1m_2 b = l_1l_2(2a+b).
\end{equation}
Then we can write the sets of weights at the fixed points below:
$$P_0\colon \Big\{a, \frac{2a+b}{m_1},  \frac{2a+b}{m_2}\Big\},\,\,\,P_3\colon \Big\{ -\frac{2a+b}{m_1},  -\frac{2a+b}{m_2}, -a\Big\},$$
$$P_1\colon \Big\{-a, \frac{b}{l_1}, \frac{b}{l_2}\Big\},\,\,\, P_2\colon \Big\{-\frac{b}{l_1}, -\frac{b}{l_2}, a\Big\}.$$
If  $\gcd\Big(\frac{2a+b}{m_1}, \frac{2a+b}{m_2}\Big) = m > 1$, by effectiveness of the action at $P_0$, we have $m\nmid a$; let $C$ be the connected component of $M^{\Z_m}$ containing $P_0$, then the tangent space $T_{P_0}C$ cannot contain the complex line with weight $a$, so $C$ is a 4-dimensional symplectic manifold with only 2 fixed points $P_0$ and $P_3$ (it cannot contain $P_1$ or $P_2$), a contradiction. Hence
\begin{equation}\label{m12}
\gcd \Big(\frac{2a+b}{m_1}, \frac{2a+b}{m_2}\Big) = 1.
\end{equation}
Similarly, 
\begin{equation}\label{l12}
\gcd\Big(\frac{b}{l_1}, \frac{b}{l_2}\Big) = 1.
\end{equation}
If $\frac{2a+b}{m_1} =  \frac{2a+b}{m_2}=1$, then this contradicts to Lemma~\ref{JT} on the smallest weight on $M$. Together with
(\ref{m12}), with no loss of generality,  assume
\begin{equation}\label{m1<m2}
\frac{2a+b}{m_1} <  \frac{2a+b}{m_2}.
\end{equation}
With no loss of generality, assume
\begin{equation}\label{l1<l2}
 \frac{b}{l_1} \leq \frac{b}{l_2}.
\end{equation}
Since $H^2(M, \Z)=\Z$, we have $c_1(M)=k[\omega]$ for some $k\in\Z$. Then by Lemma~\ref{c1} for $P_1$ and $P_2$, and for $P_0$ and $P_3$, we get
\begin{equation}\label{12}
-2a+\frac{2b}{l_1}+\frac{2b}{l_2} = kb,
\end{equation}
and
\begin{equation}\label{03}
2a + 2\frac{2a+b}{m_1}+2\frac{2a+b}{m_2} = k(2a+b).
\end{equation}

 First we show that $a$ cannot be the largest weight on $M$. Assume instead that $a\geq \frac{b}{l_2}$ and $a\geq \frac{2a+b}{m_2}$. Then $a > 1$ by (\ref{m1<m2}). Then the smallest weight on $M$ is $\frac{2a+b}{m_1}$ or $\frac{b}{l_1}$.
If the smallest weight is  $\frac{2a+b}{m_1}$, then it contradicts to Lemma~\ref{JT}. If the smallest weight on $M$ is  $\frac{b}{l_1}$, by effectiveness of the action at $P_1$, we have $a > \frac{b}{l_1}$. By Lemma~\ref{mod}, we have 
$$\{\mbox{weights at $P_0$}\} =\{\mbox{weights at $P_1$}\} \mod a,$$
so we must have $\frac{b}{l_1} = \frac{2a+b}{m_1}$, again it contradicts to Lemma~\ref{JT}. 

  We next show that
\begin{equation}\label{a<}
 a < \frac{b}{l_2}.
\end{equation}
Assume instead that $a\geq \frac{b}{l_2}$. Since $a$ is not the largest weight on $M$, we have
\begin{equation}\label{a<m2}
a < \frac{2a+b}{m_2}.
\end{equation}
So $\frac{2a+b}{m_2}$ has multiplicity 1 at $P_0$ and  $-\frac{2a+b}{m_2}$ has multiplicity 1 at $P_3$, $\frac{2a+b}{m_2}$ is the largest weight at $P_0$ and $P_3$ and is between $P_0$ and $P_3$. If $m_2 = 1$, then by (\ref{m12}), 
$\frac{2a+b}{m_1} = 1$ is the smallest weight on $M$, is from $P_0$ to $P_3$, contradicting to Lemma~\ref{JT}. Hence $m_2\geq 2$. 
Then by Lemma~\ref{ij} for $P_0$ and $P_3$, we have 2 possibilities:
\begin{itemize}
\item [(i)]  $c_1(M) = 2[\omega]$ and $m_2=2$.
\item [(ii)]  $c_1(M) = [\omega]$  and $m_2=4$.
\end{itemize}
First consider (i). Then (\ref{12}) gives
$$ a-\frac{b}{l_2} =  \frac{b}{l_1} - b.$$
The left hand side is $\geq 0$ and the right hand side is $\leq 0$. If $l_1 > 1$, this is a contradiction.
If $l_1=1$, then $l_2=1$ by (\ref{l1<l2}) and then $a=b$.  By effectiveness of the action at $P_1$, we get $a=b=1$, then $\frac{2a+b}{m_2}$ with $m_2=2$ is not  an integer, a contradiction. 
Now consider (ii). Then  (\ref{12}) gives
$$ a-\frac{b}{l_2} =  \frac{b}{l_1} - \frac{b}{2}.$$
If $l_1 > 2$, then this is not possible. If $l_1=2$, then $a=\frac{b}{l_2}$. By (\ref{l1<l2}), 
$l_2=2$ or $l_2=1$. Then $a=b/2$ or $a=b$. When $l_1=l_2=2$ and $a=b/2$,
by effectiveness of the action at $P_1$, we get $a=1$ and $b=2$, then  (\ref{a<m2}) with $m_2=4$  give a contradiction. When $l_1=2$, $l_2=1$, and $a=b$, by effectiveness of the action at $P_1$, we get $a=b=2$, then  $\frac{2a+b}{m_2}$ with $m_2=4$ is not an integer, a contradiction. 

 The argument above shows that (\ref{a<}) holds.
So $\frac{b}{l_2} >1$. By (\ref{l12}) and (\ref{l1<l2}), we have
$$\frac{b}{l_1} < \frac{b}{l_2}.$$
Now $\frac{b}{l_2}$ is the largest weight at $P_1$ and $P_2$, and is between $P_1$ and $P_2$. By Lemma~\ref{mod},
$$\{\mbox{weights at $P_1$}\} =\{\mbox{weights at $P_2$}\} \mod \frac{b}{l_2}.$$
So either $ 2a=  \frac{b}{l_2}$ and $2\frac{b}{l_1}=\frac{b}{l_2}$, or $\pm a = \pm \frac{b}{l_1}$. In either case, we have
\begin{equation}\label{mod12}
 b=l_1 a.
\end{equation}
Now (\ref{mod12}) gives $a=\frac{b}{l_1}$. If $a\geq\frac{2a+b}{m_2}$, then the smallest weight on $M$ can only be
$\frac{2a+b}{m_1}$, contradicting to Lemma~\ref{JT}. So
$$ a < \frac{2a+b}{m_2}.$$
Since $\frac{b}{l_2}$ is the largest weight at $P_1$ and $P_2$, and it is between $P_1$ and $P_2$, by Lemma~\ref{ij}, we have 2 possibities:
\begin{itemize}
\item [(i')]  $c_1(M)=2[\omega]$ and $l_2=1$.
\item [(ii')]  $c_1(M)=[\omega]$ and $l_2 = 2$.
\end{itemize}

First consider Case (i').  Since  $\frac{2a+b}{m_2}$ is the largest weight at $P_0$ and $P_3$, and it is between $P_0$ and $P_3$, by Lemma~\ref{ij} for $P_0$ and $P_3$ and the fact $c_1(M)=2[\omega]$, we get
\begin{equation}\label{m2}
m_2 = 2.
\end{equation}
Equation (\ref{03}) now is 
\begin{equation}\label{03'}
a+ \frac{2a+b}{m_1} +\frac{2a+b}{m_2} = 2a +b.
\end{equation}
By (\ref{ringimp}), (\ref{mod12}), (\ref{m2}), (\ref{03'}), and the fact that $l_2=1$, we get
$$m_1 = 3 \,\,\,\mbox{and}\,\,\, l_1 = 4.$$
Now the set of weights at $P_1$ is $\{-a, a, 4a\}$; by effectiveness of the action, we get $a=1$. By (\ref{mod12}), we get
$$b=4.$$
The data above give us the sets of weights at the fixed points as in (2a).

Consider Case (ii'). Since  $\frac{2a+b}{m_2}$ is the largest weight at $P_0$ and $P_3$, and is between $P_0$ and $P_3$, by Lemma~\ref{ij} for $P_0$ and $P_3$ and the fact 
$c_1(M)=[\omega]$, we get
\begin{equation}\label{m2'}
m_2 = 4.
\end{equation}
Equation (\ref{03}) now is 
\begin{equation}\label{03''}
a+ \frac{2a+b}{m_1} +\frac{2a+b}{m_2} =\frac{2a +b}{2}.
\end{equation}
By (\ref{ringimp}), (\ref{mod12}),  (\ref{m2'}), (\ref{03''}),  and the fact that $l_2=2$, we get
$$m_1 = 6\,\,\,\mbox{and}\,\,\, l_1 = 10.$$
Then the set of weights at $P_1$  is $\{-a, a, 5a\}$; by effectiveness of the action, we get
$$a = 1.$$
Then by (\ref{mod12}),
$$b=10.$$
The data above give us the sets of weights at the fixed points as in (2b).
\end{proof}

\section{determining the ring $H^*(M; \Z)$ and the total Chern class c(M)}

In this section, we determine the integral cohomology ring $H^*(M; \Z)$ and the total Chern class
$c(M)$.

\begin{theorem}\label{thmring}
Let $(M, \omega)$ be a compact  effective Hamiltonian $S^1$-manifold of dimension 6 with 4 fixed points $P_0$, $P_1$, $P_2$, and $P_3$. Then $H^*(M; \Z)$ has the following 4 types, corresponding to
 (1a), (1b), (2a) and (2b) in Theorems~\ref{1ab} and \ref{2ab}. 
\begin{itemize}
\item [(1a)] $H^*(M; \Z)=\Z[x]/x^4$.
\item [(1b)] $H^*(M; \Z)=\Z[x, y]/(x^2-2y, y^2)$.
\item [(2a)] $H^*(M; \Z)=\Z[x, y]/(x^2-5y, y^2)$.
\item [(2b)] $H^*(M; \Z)=\Z[x, y]/(x^2-22y, y^2)$.
\end{itemize}
Here, $\deg(x) =2$ and $\deg(y)=4$.
\end{theorem}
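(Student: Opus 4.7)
The plan is to apply Lemma~\ref{ring} directly in each of the four cases. Since $H^2(M;\Z)=\Z$ we may assume $[\omega]$ is primitive integral, and the weights together with the moment map differences have already been pinned down in Theorems~\ref{1ab} and~\ref{2ab}. Feeding these data into the formula
$$\alpha_i = \frac{\Lambda_i^-}{\prod_{j=0}^{i-1}\big(\phi(P_j)-\phi(P_i)\big)}[\omega]^i$$
produces each basis element $\alpha_i\in H^{2i}(M;\Z)$ as an explicit rational multiple of $[\omega]^i$. A brief computation yields $\alpha_1=[\omega]$ in all four cases, and $\alpha_2 = [\omega]^2,\ \tfrac{1}{2}[\omega]^2,\ \tfrac{1}{5}[\omega]^2,\ \tfrac{1}{22}[\omega]^2$ in cases (1a), (1b), (2a), (2b) respectively, with $\alpha_3$ an analogous multiple of $[\omega]^3$.

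By Proposition~\ref{equibase}, $\{\alpha_0,\alpha_1,\alpha_2,\alpha_3\}$ is a $\Z$-basis for $H^*(M;\Z)$; in particular $\alpha_1$ generates $H^2\cong\Z$ and $\alpha_2$ generates $H^4\cong\Z$ (this primitivity is the only conceptual ingredient beyond the calculation). In case (1a) the formula gives $\alpha_i=[\omega]^i$ for all $i$, so $[\omega]$ alone generates the ring, and since $\dim M=6$ we have $[\omega]^4=0$; setting $x=[\omega]$ yields $H^*(M;\Z)=\Z[x]/x^4$.

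For cases (1b), (2a), (2b), set $x=\alpha_1=[\omega]$ and $y=\alpha_2$. The identity $\alpha_2=\tfrac{1}{k}[\omega]^2$, with $k=2,5,22$ respectively, translates into the relation $x^2=ky$; and $y^2\in H^8(M;\Z)=0$ gives $y^2=0$. These are the two relations in the candidate presentation $\Z[x,y]/(x^2-ky,\,y^2)$, which as a graded $\Z$-module has basis $\{1,x,y,xy\}$ in degrees $0,2,4,6$. The ring map sending $x\mapsto\alpha_1$, $y\mapsto\alpha_2$ is well defined, and a direct check using the formulas of the first step gives $xy=\alpha_1\alpha_2=\tfrac{1}{k}[\omega]^3=\alpha_3$; hence it sends the basis $\{1,x,y,xy\}$ bijectively to $\{\alpha_0,\alpha_1,\alpha_2,\alpha_3\}$ and is an isomorphism.

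The whole argument is essentially bookkeeping once Lemma~\ref{ring} and the weight/moment-map data are in hand. The one step that deserves care is ensuring that the coefficient $k$ really is the stated value and not a proper divisor of it, but this is automatic: the basis property of Proposition~\ref{equibase} forces $y=\alpha_2$ to be primitive in $H^4(M;\Z)$, so $k$ cannot be absorbed into $y$. The second verification to watch is the equality $\alpha_1\alpha_2=\alpha_3$ (rather than a nontrivial integer multiple of $\alpha_3$), and this is handed to us by the matching rational multipliers produced by Lemma~\ref{ring}. No further obstacle appears.
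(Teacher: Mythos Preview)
Your proof is correct and follows essentially the same approach as the paper: both apply Lemma~\ref{ring} with the weight data from Theorems~\ref{1ab} and~\ref{2ab} to express the generators $\alpha_i$ as explicit multiples of $[\omega]^i$, then read off the ring presentation. The only cosmetic difference is that the paper computes $\alpha_3$ from the weights at $P_3$ and invokes Poincar\'e duality to deduce the coefficient on $\alpha_2$, whereas you compute $\alpha_2$ directly from the weights at $P_2$ and verify $\alpha_1\alpha_2=\alpha_3$ afterward.
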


\begin{proof}
Since $H^2(M; \Z)=\Z$, with no loss of generality, assume $[\omega]$ is primitive integral. Let $\phi$ be the moment map,
and by (\ref{order}) let 
$$\phi(P_1)-\phi(P_0)=a\in\N, \,\, \phi(P_2)-\phi(P_1)=b\in\N,\,\,  \phi(P_3)-\phi(P_2)=c\in\N.$$

For (1a), by (1a) in Theorem~\ref{1ab} on the set of weights at $P_3$, and by Lemma~\ref{ring}, we get that
$$\alpha_3 = [\omega]^3\in H^6(M; \Z)$$
is a generator. 
Since $\alpha_1 = [\omega]\in H^2(M; \Z)$ is a generator, by Poincar\'e duality, $\alpha_2=[\omega]^2\in H^4(M; \Z)$ is a generator. By Lemma~\ref{ring}, $H^*(M; \Z)$ is generated by 
$$1,\,\, [\omega],\,\, [\omega]^2,\,\, [\omega]^3.$$
Let $[\omega]=x$, we get the stated ring for (1a).

For (1b), by (1b) in Theorem~\ref{1ab} on the set of weights at $P_3$ with $a=c$, and by Lemma~\ref{ring}, we get that
$$\alpha_3 = \frac{1}{2}[\omega]^3\in H^6(M; \Z)$$
is a generator. Since $\alpha_1 = [\omega]\in H^2(M; \Z)$ is a generator, by Poincar\'e duality, $\alpha_2=\frac{1}{2}[\omega]^2\in H^4(M; \Z)$ is a generator. By Lemma~\ref{ring}, $H^*(M; \Z)$ is generated by 
$$1,\,\, [\omega],\,\, \frac{1}{2}[\omega]^2,\,\, \frac{1}{2}[\omega]^3.$$
Let  $[\omega]=x$ and $\frac{1}{2}[\omega]^2 =y$, then $x^2 = 2y$ and we get the stated ring for (1b).

Similarly, for (2a), by  (2a) in Theorem~\ref{2ab} on the sets of weights at the fixed points with $a=c=1$ and $b=4$, and by Lemma~\ref{ring}, we get that $H^*(M; \Z)$ is generated by 
$$1,\,\, [\omega],\,\, \frac{1}{5}[\omega]^2,\,\, \frac{1}{5}[\omega]^3.$$
Let  $[\omega]=x$ and $\frac{1}{5}[\omega]^2 =y$, then $x^2 = 5y$ and we get the stated ring for (2a).

Similarly, for (2b), by  (2b) in Theorem~\ref{2ab} on the sets of weights at the fixed points with $a=c=1$ and $b=10$, and by Lemma~\ref{ring}, we get that
 $H^*(M; \Z)$ is generated by 
$$1,\,\, [\omega],\,\, \frac{1}{22}[\omega]^2,\,\, \frac{1}{22}[\omega]^3.$$
Let  $[\omega]=x$ and $\frac{1}{22}[\omega]^2 =y$, then $x^2 = 22y$ and we get the stated ring for (2b).
\end{proof}

\begin{lemma}\label{extension}
Let $(M, \omega)$ be a compact  effective Hamiltonian $S^1$-manifold of dimension 6 with 4 fixed points $P_0$, $P_1$, $P_2$, and $P_3$, and with moment map $\phi$. 
Assume $[\omega]$ is primitive integral.
Then the equivariant cohomology $H_{S^1}^*(M; \Z)$ as an $H^*(\CP^{\infty}; \Z)$ module has the following 4 types of basis, corresponding to (1a), (1b), (2a) and (2b) in Theorems~\ref{1ab} and \ref{2ab}, where $\ut$ is as in Lemma~\ref{ut}.
\begin{itemize}
\item [(1a)]  $1$,  $\Tilde\alpha_1=\ut+\phi(P_0)t$,  $\Tilde\alpha_2=\prod_{j=0}^{1}\big(\ut +\phi(P_j)t\big)$, $\Tilde\alpha_3=\prod_{j=0}^{2}\big(\ut +\phi(P_j)t\big)$.
\item [(1b)]  $1$,  $\Tilde\alpha_1=\ut+\phi(P_0)t$,  $\Tilde\alpha_2=\frac{1}{2}\prod_{j=0}^{1}\big(\ut +\phi(P_j)t\big)$, $\Tilde\alpha_3=\frac{1}{2}\prod_{j=0}^{2}\big(\ut +\phi(P_j)t\big)$.
\item [(2a)] $1$,  $\Tilde\alpha_1=\ut+\phi(P_0)t$,  $\Tilde\alpha_2=\frac{1}{5}\prod_{j=0}^{1}\big(\ut +\phi(P_j)t\big)$, $\Tilde\alpha_3=\frac{1}{5}\prod_{j=0}^{2}\big(\ut +\phi(P_j)t\big)$.
\item [(2b)] $1$,  $\Tilde\alpha_1=\ut+\phi(P_0)t$,  $\Tilde\alpha_2=\frac{1}{22}\prod_{j=0}^{1}\big(\ut +\phi(P_j)t\big)$, $\Tilde\alpha_3=\frac{1}{22}\prod_{j=0}^{2}\big(\ut +\phi(P_j)t\big)$.
\end{itemize}
\end{lemma}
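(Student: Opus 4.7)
The plan is to verify directly that the elements listed in each case satisfy the defining property of the equivariant basis in Proposition~\ref{equibase}; once this is established, their basis status as an $H^*(\CP^{\infty};\Z)$-module follows immediately from that proposition. The key observation is that by Lemma~\ref{ut}, at each point $P_j$ one has $\ut|_{P_j} = -\phi(P_j)t$, so the factor $\ut + \phi(P_j)t$ vanishes at $P_j$. Hence for every $k < i$, the product
\begin{equation*}
\Pi_i \;:=\; \prod_{j=0}^{i-1}\bigl(\ut + \phi(P_j)t\bigr)
\end{equation*}
restricts to zero at $P_k$. By Corollary~\ref{cor}, $\Pi_i = a_i\,\Tilde\alpha_i$ for some $a_i\in\Z$, where $\Tilde\alpha_i$ is the basis element of Proposition~\ref{equibase}. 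Restricting to $P_i$ identifies the integer:
\begin{equation*}
a_i\,\Lambda_i^- \;=\; \prod_{j=0}^{i-1}\bigl(\phi(P_j)-\phi(P_i)\bigr).
\end{equation*}

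Next, I would plug in the weight data from Theorems~\ref{1ab} and \ref{2ab} and compute $a_i$ case by case. In (1a), the product at $P_2$ is $(-a-b)(-b)=b(a+b)=\Lambda_2^-$, and at $P_3$ it is $-c(b+c)(a+b+c)=\Lambda_3^-$, so $a_2=a_3=1$. In (1b), the halved weights yield $\Lambda_2^- = \tfrac{1}{2}b(a+b)$ and $\Lambda_3^- = -\tfrac{1}{2}a(a+b)(2a+b)$, while the corresponding products of moment-map differences are $b(a+b)$ and $-a(a+b)(2a+b)$, giving $a_2=a_3=2$. In (2a), with $a=c=1$, $b=4$, one computes $\Lambda_2^-=4$, $\Lambda_3^-=-6$, against products $20$ and $-30$, so $a_2=a_3=5$. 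In (2b), with $a=c=1$, $b=10$, the analogous numbers $\Lambda_2^-=5$, $\Lambda_3^-=-6$, $110$, $-132$ yield $a_2=a_3=22$. In every case, $a_1=1$ trivially.

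Finally, I would observe that dividing $\Pi_i$ by $a_i$ yields exactly the element listed in the lemma, and the result is integral: Proposition~\ref{equibase} asserts the existence of an integral class with the prescribed restriction $\Lambda_i^- t^i$ at $P_i$ and zero elsewhere, Corollary~\ref{cor} makes it unique, and the equality $\Pi_i=a_i\Tilde\alpha_i$ forces $a_i^{-1}\Pi_i=\Tilde\alpha_i\in H^{2i}_{S^1}(M;\Z)$. This verifies the proposed elements are precisely the basis elements of Proposition~\ref{equibase}, proving the lemma. The only real pitfall is keeping signs and arithmetic straight when evaluating $\Lambda_i^-$ and the product $\prod_{j<i}(\phi(P_j)-\phi(P_i))$ in each case; there is no conceptual obstacle, as all the structural input (the weights, the moment-map differences, Proposition~\ref{equibase} and Corollary~\ref{cor}) has already been assembled.
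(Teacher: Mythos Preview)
Your proof is correct and follows the same overall strategy as the paper: write $\Pi_i = a_i\Tilde\alpha_i$ via Corollary~\ref{cor}, determine $a_i$, and conclude. The only difference is in how $a_i$ is pinned down. The paper restricts $\Pi_i = a_i\Tilde\alpha_i$ to ordinary cohomology to obtain $[\omega]^i = a_i\alpha_i$ and then reads off $a_i$ from the ring relations already established in Theorem~\ref{thmring}; you instead restrict to the fixed point $P_i$ and compute $a_i$ directly from the weight data of Theorems~\ref{1ab} and \ref{2ab}. Your route is slightly more self-contained (it does not need the ring computation as an intermediary), while the paper's route makes the link to the ordinary cohomology ring more explicit; the underlying arithmetic is of course the same, since Theorem~\ref{thmring} was itself obtained from exactly the ratio $\Lambda_i^-/\prod_{j<i}(\phi(P_j)-\phi(P_i))$ via Lemma~\ref{ring}.
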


\begin{proof}
Let $\{\Tilde\alpha_i\,|\, 0\leq i\leq 3\}$ be the basis of $H^*_{S^1}(M; \Z)$ as in Proposition~\ref{equibase}. By Corollary~\ref{cor},
$$\prod_{j < i}(\ut +\phi(P_j)t) = a_i\Tilde\alpha_i\,\,\,\mbox{with $a_i\in\Z$}.$$
Restricting this to ordinary cohomology, we get
$$[\omega]^i = a_i\alpha_i.$$
By Theorem~\ref{thmring}, for each case (1a), (1b), (2a) and (2b), we know the relation of $\alpha_i$ with $[\omega]^i$, so we know $a_i$. Then plugging in the $a_i$ to the identity on 
$\Tilde\alpha_i$, we get the results.
\end{proof}

\begin{theorem}\label{Chern}
Let $(M, \omega)$ be a compact effective Hamiltonian $S^1$-manifold of dimension 6 with 4 fixed points $P_0$, $P_1$, $P_2$, and $P_3$. Then the total Chern class $c(M)$ has the following 4 types, corresponding to (1a), (1b), (2a) and (2b) in Theorems~\ref{1ab} and \ref{2ab}.
\begin{itemize}
\item [(1a)] $c(M)= (1+x)^4$.
\item [(1b)] $c(M)= 1 + 3x + 8y + 4xy$.
\item [(2a)] $c(M)= 1 + 2x + 12y + 4xy$.
\item [(2b)] $c(M)= 1 + x + 24y + 4xy$.
\end{itemize}
Here, $\deg(x) =2$ and $\deg(y)=4$.
\end{theorem}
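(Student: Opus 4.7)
The approach is to compute each equivariant Chern class $c_k^{S^1}(M)\in H^{2k}_{S^1}(M;\Z)$ by expanding it in the $\Z[t]$-basis $\{\Tilde\alpha_0,\Tilde\alpha_1,\Tilde\alpha_2,\Tilde\alpha_3\}$ of Lemma~\ref{extension}, and then restricting to ordinary cohomology (set $t=0$) to read off $c_k(M)$. The input data are the restrictions $c_k^{S^1}(M)|_{P_i} = e_k(w_{i,1},w_{i,2},w_{i,3})\,t^k$ where $e_k$ is the $k$-th elementary symmetric polynomial in the weights at $P_i$; these are all known explicitly from Theorems~\ref{1ab} and \ref{2ab}.

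First I would handle $c_1$ and $c_3$ separately, since they are cheap. For $c_1(M) = k\,[\omega]$, Lemma~\ref{c1} applied to $P_0$ and $P_3$ gives $k = (\Gamma_{P_0}-\Gamma_{P_3})/(\phi(P_3)-\phi(P_0))$; plugging in the weights yields $k = 4, 3, 2, 1$ in cases (1a), (1b), (2a), (2b) respectively. For $c_3(M) = n_3\,\alpha_3$, the Euler characteristic equals the number of fixed points, so $c_3[M]=4$; together with $\int_M \alpha_3 = 1$ (from ABBV localization applied to $\Tilde\alpha_3$, where only $P_3$ contributes and $\Tilde\alpha_3|_{P_3} = \Lambda_3^- t^3$ coincides with the equivariant Euler class there because all three weights at $P_3$ are negative), this forces $n_3 = 4$ uniformly.

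For $c_2(M) = n_2\,\alpha_2$ I write, by degree,
\[
c_2^{S^1}(M) = B_0\, t^2 + B_1\, t\,\Tilde\alpha_1 + B_2\,\Tilde\alpha_2, \qquad B_0,B_1,B_2\in\Z.
\]
Restricting successively at $P_0$, $P_1$, $P_2$ yields an upper-triangular system for $B_0$, $B_1$, $B_2$, using $\Tilde\alpha_j|_{P_i}=0$ for $i<j$, $\Tilde\alpha_j|_{P_j}=\Lambda_j^- t^j$, and the explicit formulas for $\Tilde\alpha_1|_{P_2}$ and $\Tilde\alpha_2|_{P_2}$ that Lemma~\ref{extension} makes available in terms of the $\phi(P_i)$'s. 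Setting $t=0$ kills the $B_0$ and $B_1$ terms and leaves $c_2(M) = B_2\,\alpha_2$; the restriction at $P_3$ serves as an independent consistency check. Finally, I convert $\alpha_2, \alpha_3$ into the generators $x, y$ via Lemma~\ref{ring} and Theorem~\ref{thmring} to produce the stated form of $c(M)$.

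The principal obstacle is the $c_2$ computation in cases (1b), (2a), (2b): although the ingredients are explicit and the linear system is small, tracking the $e_2$'s and the restrictions of $\Tilde\alpha_1,\Tilde\alpha_2$ in terms of the moment-map values, and verifying that $B_2$ collapses to the clean integers $8, 12, 24$, requires some care; the symmetries $\phi(P_1)-\phi(P_0)=\phi(P_3)-\phi(P_2)$ and $a = c$ that Theorems~\ref{1ab} and \ref{2ab} established are what make the arithmetic work out. In case (1a) an attractive shortcut bypasses the linear system entirely: one checks that $\prod_{j=0}^{3}\bigl(1 + \ut + \phi(P_j)\,t\bigr)$ restricts at each $P_i$ to $\prod_{k=1}^{3}\bigl(1 + w_{i,k}\,t\bigr) = c^{S^1}(M)|_{P_i}$, so by injectivity of the fixed-point restriction map (which holds since $H^*_{S^1}(M;\Z)$ is free over $\Z[t]$ by Proposition~\ref{equibase}) the two equivariant classes coincide, and setting $t=0$ gives $c(M)=(1+x)^4$ immediately.
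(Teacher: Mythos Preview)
Your proposal is correct and follows essentially the same approach as the paper: expand each $c_k^{S^1}(M)$ in the $\Z[t]$-basis $\{\Tilde\alpha_i\}$, solve the upper-triangular system obtained by restricting to $P_0,\dots,P_k$, and set $t=0$. Your treatment of $c_1$ via Lemma~\ref{c1} and the product shortcut for case~(1a) are minor embellishments on the paper's argument, not a different route.
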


\begin{proof}
Since $H^2(M; \Z)=\Z$, with no loss of generality, assume $[\omega]$ is primitive integral.
As before, we use $\Gamma_i$ to denote the sum of weights at $P_i$, and use $\Lambda_i^-$ to denote the product of the negative weights at $P_i$, for $i=0, 1, 2, 3$.

Using the basis  of $H^*_{S^1}(M; \Z)$  and the degree of the equivariant first Chern class $c_1^{S^1}(M)$, we can write 
\begin{equation}\label{eqc1}
c_1^{S^1}(M) = a_0t + a_1\Tilde\alpha_1, \,\,\,\mbox{where $a_0, a_1\in\Z$}.
\end{equation}  
Restricting (\ref{eqc1}) to $P_0$, we get
$$\Gamma_0 = a_0.$$
Restricting (\ref{eqc1}) to $P_1$, we get
$$\Gamma_1 = a_0 + a_1\Lambda_1^-.$$
Solving the two equalities for $a_0$ and $a_1$ and plugging them back to (\ref{eqc1}),  we get 
$$c_1^{S^1}(M) = \Gamma_0t+\frac{\Gamma_1-\Gamma_0}{\Lambda_1^-}\Tilde\alpha_1.$$
Restricting this equality to ordinary cohomology, we get
$$c_1(M) = \frac{\Gamma_1-\Gamma_0}{\Lambda_1^-}\alpha_1.$$
Similarly, for the equivariant second Chern class $c_2^{S^1}(M)$, we can write 
$$c_2^{S^1}(M) = b_0t^2 + b_1t\Tilde\alpha_1 + b_2\Tilde\alpha_2, \,\,\,\mbox{where $b_0, b_1, b_2\in\Z$}.$$ 
As for the case of $c_1^{S^1}(M)$, we respectively restrict this equality to $P_0$, $P_1$ and $P_2$, solve 3 equations for the constants $b_0$, $b_1$ and $b_2$, and plug them back to $c_2^{S^1}(M)$, we can get that
$$c_2^{S^1}(M) = (\sigma_2(P_0)) t^2 + \frac{\sigma_2(P_1)-\sigma_2(P_0)}{\Lambda_1^-}t\Tilde\alpha_1+ \frac{\sigma_2(P_2)-\sigma_2(P_0)-\frac{\sigma_2(P_1)-\sigma_2(P_0)}{\Lambda_1^-}\big(\Tilde\alpha_1(P_2)\big)}{\Lambda_2^-}\Tilde\alpha_2.$$ 
Here, $\sigma_2(P_i)$ is the degree 2 symmetric polynomial in the weights at $P_i$.
Restricting the last equality to ordinary cohomology, we get
$$c_2(M) = \frac{\sigma_2(P_2)-\sigma_2(P_0)-\frac{\sigma_2(P_1)-\sigma_2(P_0)}{\Lambda_1^-}\big(\Tilde\alpha_1(P_2)\big)}{\Lambda_2^-}\alpha_2.$$
Since the top Chern number is equal to the number of fixed points, we have
$$c_3(M) = 4\alpha_3.$$

For each case, using the basis of $H^*_{S^1}(M; \Z)$ as in Lemma~\ref{extension}, the sets of weights in Theorems~\ref{1ab} and \ref{2ab}, and the ring $H^*(M; \Z)$ in Theorem~\ref{thmring}, we can obtain the results by computing $c_1(M)$ and $c_2(M)$.  For example, for (1a), we get
 $$c(M)=1+4\alpha_1 + 6\alpha_2 + 4\alpha_3.$$
By  (1a)  of Theorem~\ref{thmring}, we have $\alpha_1=x$, $\alpha_2=x^2$, $\alpha_3=x^3$. So 
$c(M)$ for (1a) follows.
\end{proof}

\end{document}